\documentclass[a4paper,11pt]{article}
\usepackage[margin=1in]{geometry}
\usepackage[T1]{fontenc}
\usepackage{amsthm,amsfonts,amssymb,amsmath}
\usepackage{microtype}
\usepackage{graphicx}
\usepackage{xcolor}
\usepackage{enumitem}
\usepackage[nocompress]{cite}

\usepackage[colorlinks=true,urlcolor=blue,linkcolor=blue,citecolor=blue]{hyperref}

\newtheorem{Theorem}{Theorem}
\newtheorem{Lemma}{Lemma}
	
\newtheorem{Corollary}{Corollary}	
\theoremstyle{definition}
\newtheorem{Remark}{Remark}	 
 
\newtheorem{Assumption}{Assumption} 

\numberwithin{equation}{section}

\newcommand{\C}{\mathbb{C}} 
\newcommand{\R}{\mathbb{R}} 
\newcommand{\Z}{\mathbb{Z}} 
\newcommand{\N}{\mathbb{N}} 
\newcommand{\T}{\mathbb{T}} 
\newcommand{\per}{\textup{per}}
\newcommand{\eff}{\textup{eff}}
\renewcommand{\Re}{\operatorname{Re}}
\renewcommand{\Im}{\operatorname{Im}}
\newcommand{\iu}{\mathrm{i}}
\newcommand{\eu}{\mathrm{e}}
\newcommand{\eps}{\varepsilon}
\newcommand{\vb}{\mathbf{v}}
\newcommand{\wb}{\mathbf{w}}
\newcommand{\ub}{\mathbf{u}}

\newcommand{\Non}{\mathcal{N}}

\newcommand{\bphi}{\boldsymbol{\phi}}
\newcommand{\bvphi}{\boldsymbol{\varphi}}
\newcommand{\bpsi}{\boldsymbol{\psi}}
\newcommand{\El}{\mathcal{L}}
\newcommand{\Be}{\mathcal{B}}
\newcommand{\de}{\mathrm{d}}
\newcommand{\ls}{\lesssim}

\DeclareMathOperator{\Ran}{Ran}

\allowdisplaybreaks[4]

\title{Dynamics of dissipative periodic optical waves in an external potential}

\author{Lukas Bengel$^{*}$}
\date{
    \small $^*$Institute for Analysis, Karlsruhe Institute of Technology, Englerstrasse 2, 76131 Karlsruhe, Germany; \texttt{lukas.bengel@kit.edu}
}

\begin{document}

\maketitle

\begin{abstract} 
We study the dynamics of periodic waves in a damped-driven nonlinear Schr\"odinger (NLS) equation perturbed by a small spatially dependent transport term. This equation arises in nonlinear optics as a model for light propagation in a passive optical cavity driven by a bichromatic laser source. Our main result shows that, for initial data close to a stable stationary wave of the translation-invariant unperturbed problem, the solution of the perturbed problem remains close to translated copies of this wave, with the translation parameter evolving according to an effective ordinary differential equation. If this effective equation admits a stable equilibrium, we prove that the solution of the perturbed NLS converges to a stationary state associated with that equilibrium. In particular, the asymptotically selected state attracts solutions with initial data that are not necessarily close to it, showing that its basin of attraction extends far beyond a small neighborhood of the state. The analysis is complicated by the loss of derivatives caused by the heterogeneous transport and combines the renormalization group method with a refined bootstrap argument that exploits parity-induced cancellations of quadratic nonlinear terms. Numerical simulations illustrate the analytical results.

\paragraph*{Keywords.}Modulational dynamics, stability, periodic solutions, damped-driven NLS\\
\textbf{Mathematics Subject Classification (2020).} Primary, 35B35, 35Q60; Secondary, 35B40, 35B20
\end{abstract}

\section{Introduction}
In this paper, we study the dynamical behavior of dissipative periodic optical waves in an external potential. The model considered is the damped-driven nonlinear Schrödinger (NLS) equation perturbed by a heterogeneous transport term,
\begin{align}\label{eq:LLE}
\begin{split}
    u_t &= \iu d u_{xx} + \eps V(x) u_x - (1 + \iu \omega) u + \iu |u|^2 u + f , \qquad (t,x) \in [0,\infty) \times \T\\
    u(0) &= u_0 \in H^1(\T),
\end{split}
\end{align}
where $\T = \R/(2\pi\Z)$ is the one-dimensional torus of length $2\pi$, $u \colon [0,\infty) \times \T \to \C$, $V \colon \T \to \R$ is a smooth periodic potential, and $\eps \geq 0$ is small. The parameters $d, \omega,f$ are real and satisfy $d \neq 0$ and $f>0$. If $d>0$ then the nonlinearity is focusing, whereas for $d<0$ it is defocusing. The NLS~\eqref{eq:LLE} with periodic potential has been derived in nonlinear optics~\cite{Bengel2024Pinning} to describe the light field inside a circular cavity which is excited by a bichromatic external laser source, see Section~\ref{sec:physics} for more details on the physical derivation. In this context,~\eqref{eq:LLE} is also known as the Lugiato-Lefever equation. Here, we address the following problem:

\emph{Let $\psi_0$ be a stable stationary solution of~\eqref{eq:LLE} for $\eps=0$, and let $u_0 \approx \psi_0$. Determine the dynamics of~\eqref{eq:LLE} for small $\eps >0$.}

The related problem of describing the motion of solitary waves under external potentials has been studied for purely dispersive systems, including the nonlinear Schr\"odinger equation~\cite{Froehlich2004Solitary} and the Korteweg-de Vries equation~\cite{Dejak2006Longtime}, using modulation theory. In these works, the solution is shown to remain close to a manifold of solitary waves over long but finite time intervals, while the evolution along this manifold is governed by an effective finite-dimensional system. For equations with dissipation that are second order in space, the renormalization group method was used in~\cite{Promislow2002Renormalization} to analyze the dynamics of localized solutions under perturbations that are bounded in $H^1(\R)$. It was shown that the solution remains for all times close to a translated localized profile whose position evolves according to a first-order ordinary differential equation.

Let us now recall some relevant results for~\eqref{eq:LLE}. If $\eps=0$, equation~\eqref{eq:LLE} is invariant under spatial translations,
$$
    u(t,x) \mapsto u(t,x+\sigma), \qquad x,\sigma \in \T, \ t \geq 0.
$$
This yields that every stationary solution $\psi_0$ for $\eps=0$ generates a one-dimensional family of solutions, which we denote by
$$
    \psi_\sigma(x) := \psi_0(x-\sigma), \qquad x,\sigma \in \T.
$$
In both dispersion regimes $d >0$ and $d<0$ the existence of stationary solutions for $\eps=0$ has been obtained in~\cite{Hakkaev2019Generation,Bengel2025Existence,Mandel2017Apriori,Delcey2018Instability,Delcey2018Periodic,Godey2017Bifurcation,Miyaji2011Stability} using bifurcation theory and spatial dynamics techniques. In the focusing case, the solutions constructed include periodic Turing rolls bifurcating from spatially constant states~\cite{Miyaji2011Stability,Delcey2018Periodic}, as well as far from equilibrium periodic $1$- and multi-pulses~\cite{Hakkaev2019Generation,Bengel2025Existence}. Spectral stability in this case has been obtained for small periodic solutions~\cite{Delcey2018Periodic,Miyaji2011Stability} and the 1- and multi-pulse solutions~\cite{Hakkaev2019Generation,Bengel2025Existence,Bengel2024Stability}. In the defocusing case, only existence results for small amplitude waves have been established rigorously while spectral stability has so far only been verified numerically~\cite{ParraRivas2016Dark}. Finally, nonlinear stability of spectrally stable solutions for both $d>0$ and $d<0$ has been proven in~\cite{Stanislavova2018Asymptotic,Haragus2024Nonlinear}.

When $\eps>0$, the external potential breaks the translation invariance and introduces a spatially dependent transport term. The persistence of stationary solutions of the unperturbed problem for small $\eps>0$ has been studied in~\cite{Bengel2024Pinning}. It was shown that stationary solutions bifurcate from the shifted solutions $\psi_{\sigma_*}$ provided that the shift $\sigma_*$ is a simple zero of the effective potential $\mathcal{V}_\eff$ (see also eq.~\eqref{eq:EffectivePotential} below) which is an averaged version of $V$ and depends on $\psi_0$. If in addition $\psi_0$ is spectrally stable and $\mathcal{V}_\eff'(\sigma_*)>0$, then the bifurcating solutions are shown to be spectrally stable for $\eps>0$ and also nonlinearly asymptotically stable. In particular, every solution of~\eqref{eq:LLE} with $\eps>0$ whose initial condition is sufficiently close to such a bifurcating state converges to this state as $t\to\infty$.

Next, we explain the main contribution of the present paper, which is a detailed description of the dynamics of periodic waves under the heterogeneous perturbation.

The first part of our main result can be stated informally as follows. If the initial datum satisfies 
$$
    \|u_0 - \psi_{\sigma_0}\|_{H^1} \leq C_0 \eps,
$$ 
for some $C_0>0, \sigma_0 \in \T$ and a stable stationary solution $\psi_0$ for $\eps = 0$, then the $H^1(\T)$-solution $u$ of~\eqref{eq:LLE} with $0< \eps \ll 1$ is global in time and obeys 
\begin{align}\label{eq:intro_dynamics}
    \|u(t) - \psi_0(\cdot-\sigma(t))\|_{H^1} \leq C_1 \eps, \qquad
    |\dot\sigma(t) + \eps \mathcal{V}_\text{eff}(\sigma(t))| < C_1 \eps^2, \qquad \text{for all } t \geq 0
\end{align}
with $C_1 > 0$ and $|\sigma(0) -  \sigma_0| \leq C_1 \eps$. This means that the solution stays close to the family of spatial translates $\{\psi_\sigma\}_{\sigma \in \T}$, while its motion along this family is, to leading order, governed by the reduced equation
\begin{align}\label{eq:intro_ODEVeff}
    \dot\sigma = - \eps \mathcal{V}_\eff(\sigma).
\end{align}
The result is illustrated in a subsequent simulation section, where we find excellent agreement between numerical computations and theory. Let us remark that if $\psi_0$ is strongly localized in space, its dynamics under~\eqref{eq:LLE} is expected to resemble that of a particle subject to the spatially dependent drift induced by the external potential~\cite[Chap.~7]{Bengel2025PhD}. This is consistent with the fact that $\mathcal{V}_\text{eff}$ approximates $V$ for pulse-like solutions, see~\cite[Rem.~14]{Bengel2024Pinning}. To establish our result, we extend the renormalization group method from~\cite{Promislow2002Renormalization} to~\eqref{eq:LLE}. The main challenge of the proof is that the perturbation term $\eps V(x) u_x$ is not bounded in $H^1(\T)$ and thus~\eqref{eq:LLE} does not fit into the framework considered in~\cite{Promislow2002Renormalization}. Indeed, if $\eps V(x)u_x$ is treated as an inhomogeneity in the mild formulation of~\eqref{eq:LLE}, one encounters a loss of one spatial derivative. This loss cannot be recovered from the linear evolution, since the linearized flow of~\eqref{eq:LLE} is not smoothing in $H^1(\T)$. To overcome this problem, we use the fact that the perturbation is linear in $u$ and prove uniform (in $\eps$ and $\sigma$) decay estimates for the linear problem
$$
    u_t = (\El + \eps V(x) \partial_x) u,
$$
where $\El$ denotes the linearization of~\eqref{eq:LLE} with $\eps = 0$ about the spectrally stable state $\psi_\sigma$. That is, instead of treating the perturbation as an inhomogeneity as in~\cite{Promislow2002Renormalization}, we include it into the linear part and analyze its influence there. Once the uniform linear decay estimates are established, we employ the renormalization group method to establish~\eqref{eq:intro_dynamics}. 

The second part of our main contribution concerns the asymptotic behavior of solutions. Let $\sigma_*$ be a simple zero of $\mathcal{V}_\eff$ with $\mathcal{V}_\eff'(\sigma_*)>0$. Then $\sigma_*$ is a stable equilibrium of~\eqref{eq:intro_ODEVeff} and it follows from~\cite{Bengel2024Pinning} that, for $\eps>0$ small, there exists a stable stationary solution $\psi_\text{stat}$ to~\eqref{eq:LLE} which is $O(\eps)$-close to $\psi_{\sigma_*}$. In view of~\eqref{eq:intro_dynamics}, these two observations suggest that solutions are first transported close to $\psi_\text{stat}$ according to~\eqref{eq:intro_dynamics} and then converge to this stationary state as $t\to\infty$ according to the local asymptotic stability result from~\cite{Bengel2024Pinning}. Establishing this conclusion, however, involves a significant difficulty. The local stability analysis in~\cite{Bengel2024Pinning} guarantees asymptotic convergence only for initial data lying in a $O(\eps)$-neighborhood measured in $H^1(\T)$ of the stationary state. By contrast, the estimates in~\eqref{eq:intro_dynamics} provide an $O(\eps)$ description of the solution $u$ with no control ensuring that the constant in this bound is sufficiently small. Consequently, these estimates alone do not imply that the solution $u$ eventually enters the local basin of attraction of $\psi_\text{stat}$. Nevertheless, under the additional assumption that $\psi_0$ is even, we overcome this problem and prove asymptotic convergence of $u$ to the stationary solution. The main idea is to improve an estimate of the form 
$$
    \|u(t_1) - \psi_\text{stat}\|_{H^1} \leq C \eps, \qquad t_1 \geq 0
$$ 
to 
$$
    \|u(t_2) - \psi_\text{stat}\|_{H^1} \leq C \eps^{3/2}, \qquad t_2>t_1
$$ 
over a time interval of length $t_2 - t_1 \approx - \log(\eps)/\eps$. The improved estimate ensures that $u(t_2)$ lies within the local basin of attraction of $\psi_\text{stat}$. To obtain this estimate, we exploit parity-induced cancellations of quadratic terms in the nonlinearity, which allow us to close the argument. Altogether, this shows that the basin of attraction of the asymptotically selected stationary state extends beyond a small neighborhood of that state, since the initial condition $u_0$ need not be close to it. Although the convergence result is established for~\eqref{eq:LLE}, the underlying technique should extend naturally to a broader class of dissipative systems subject to heterogeneous perturbations.

\subsection{Physical background of the problem}\label{sec:physics}
In nonlinear optics, the damped-driven NLS without an external potential has been derived as an amplitude equation to describe the light field in a circular optical cavity driven by a monochromatic external laser source~\cite{Haelterman1992Dissipative}. Such an experimental set-up can be used to generate optical signals known as frequency combs~\cite{Kippenberg2018Dissipative}, which have revolutionized frequency metrology and, more recently, enabled significant advances in optical communication~\cite{Marin-Palomo2017Microresonator} or optical neural networks~\cite{Xu2021TOPSPhotonic}. In this context, the normalized parameters have the following physical meaning: $d$ is the dispersion parameter, $f$ is the strength of the laser input, and $\omega = \omega_0- \omega_\text{p}$ is the frequency mismatch between the laser source frequency $\omega_\text{p}$ and the closest resonant frequency $\omega_{0}$ of the cavity. The linear damping term models cavity losses experienced by the light field.

The model with periodic potential~\eqref{eq:LLE} was derived in~\cite{Bengel2024Pinning} for a cavity driven by a bichromatic laser source consisting of a dominant frequency $\omega_\text{p}$ of strength $f$ and a second frequency component $\omega_{\tilde{\text{p}}}$ of strength $\tilde{f}$ which is much weaker in the sense that $|\tilde{f}| \ll |f|$. In the simplest setting, the bichromatic source generates a trapping potential of the form $V(x) = \omega_{\tilde{\text{p}}} - \omega_\text{p} + \omega_\text{FSR} - 2 d \tilde{f}\cos(x)/f$, where $\omega_\text{FSR}$ denotes the free spectral range determined by the cavity dispersion relation. The parameter $\eps$ controls the strength of the potential and is typically small. For details on the derivation of~\eqref{eq:LLE}, we refer to~\cite[Appendix A]{Bengel2024Pinning}. We note that~\eqref{eq:LLE} is formulated in the co-moving frame with speed $\omega_{\tilde{p}}- \omega_\text{p}$. 

Bichromatic pumping has been used in experiments to synchronize the repetition rate of frequency combs with an external laser source, thereby enabling chip-scale low-noise microwave generation~\cite{Kudelin2024Photonic}. To achieve synchronization, one first generates a stable optical signal using a monochromatic laser, corresponding to a stable stationary solution of~\eqref{eq:LLE} for $\eps = 0$. Once this signal has formed, the second, weaker laser is switched on. If the frequency of the second laser is suitably tuned such that $\omega_\text{p}-\omega_{\tilde{\text{p}}} \approx \omega_\text{FSR}$, the cavity field synchronizes with the resulting bichromatic source~\cite{Wildi2023Sideband}. Since~\eqref{eq:LLE} is formulated in the co-moving frame determined by the two laser frequencies, a synchronized signal corresponds to a stationary solution for $\eps>0$ and synchronization occurs if the cavity field $u$ converges towards such a stationary state as $t \to \infty$. In particular, our main result establishes synchronization for a large class of initial signals.

\subsection{Set-up, spectral assumption, and the main result}\label{sec:mainresult}
We denote by $C, C_1, \dots > 0$ absolute constants. For non-negative $\alpha, \beta$ we
use the notation $\alpha \ls \beta$ if $\alpha \leq  C\beta$ and write $\alpha \ll \beta$ to indicate that this constant is small. For a quantity $\gamma$ and non-negative $\delta$ we write $\gamma = O(\delta)$ if $\|\gamma\| \ls \delta$ in some norm that will be clear from the context. Moreover, we identify $\C$-valued functions $u$ with $\R^2$-valued functions $\ub = (\Re(u),\Im(u))^\top$ using boldface symbols. Spatial derivatives are sometimes denoted by $u'$ and time-derivatives by $\dot u$. For functions $u$ defined on $\T$ we sometimes write $u(\cdot -\sigma)$ with $\sigma \in \R$ where the argument is then understood modulo $2\pi$. Finally, with $B_\eps(x)$ we denote the open ball of radius $\eps$ around $x \in X$ in a normed space $X$, which will be clear from the context.

Introducing $\ub = (\Re(u),\Im(u))^\top$, we can write~\eqref{eq:LLE} as a real system
\begin{align}\label{eq:LLE_sys}
\begin{split}
    \ub_t &=  -d J\ub_{xx} + \eps V(x) \ub_x + N(\ub), \\
    \ub(0) &= \ub_0,
\end{split}
\end{align}
where $\ub_0 = (\Re(u_0),\Im(u_0))^\top$, $N(\ub) = J( \omega \ub - (\ub_1^2 + \ub_2^2) \ub) - \ub + f \mathbf{e}_1$, $\mathbf{e}_1 = (1,0)^\top$, and
\begin{align*}
    J = 
    \begin{pmatrix}
        0 & 1 \\ -1 & 0
    \end{pmatrix}.
\end{align*}
The unperturbed equation is obtained by setting $\eps = 0$ and reads
\begin{align}\label{eq:LLE0_sys}
    \ub_t = -d J\ub_{xx} + N(\ub).
\end{align}
Throughout the proofs of this paper, we work with the formulation~\eqref{eq:LLE_sys}, which has the advantage, that the linearization about a stationary state is a $\C$-linear operator. For $\ub\in L^\infty(\T)$ we define
\begin{align}
\begin{split}
    \El_\eps(\ub) \colon H^2(\T) \subset L^2(\T) \to L^2(\T) , \qquad
    \El_\eps(\ub) = -d J \partial_x^2 + \eps V(x) \partial_x + \partial_\ub N(\ub).
\end{split}
\end{align}
If $\ub$ is a stationary solution of~\eqref{eq:LLE_sys}, then the linearization about $\ub$ is given by $\El_\eps(\ub)$.
Since the embedding $H^2(\T) \hookrightarrow L^2(\T)$ is compact, $\El_\eps(\ub)$ is a Fredholm operator of index zero and has compact resolvents.
Thus, its spectrum consists entirely of isolated eigenvalues of finite algebraic multiplicity.

Next, we state the spectral assumption on the stationary state. Let $\psi_0 \in H^2(\T)$ be a stationary solution of~\eqref{eq:LLE} for $\eps = 0$ and write $\bpsi_0 = (\Re(\psi_0),\Im(\psi_0))^\top$. 

\begin{Assumption}\label{Assum1}
For $\bpsi_0 = (\Re(\psi_0),\Im(\psi_0))^\top \in H^2(\T)$ we require the following properties.
    \begin{itemize}
        \item[(i)] Spectral stability: There exists $\rho>0$ such that 
        $$
            \sigma(\El_0(\bpsi_0)) \subset \{\lambda \in \C : \Re(\lambda) \leq -\rho\} \cup\{0\};
        $$
        \item[(ii)] Nondegeneracy: $0\in \sigma(\El_0(\bpsi_0))$ is an algebraically simple eigenvalue and we denote by $\bphi_0^* \in H^2(\T)$ the unique solution of $\El_0(\bpsi_0)^*\bphi_0^* = 0$ which satisfies $\langle \bpsi_0',\bphi_0^* \rangle_{L^2} = 1$.
    \end{itemize}
\end{Assumption}
Examples of stationary solutions for $d>0$ satisfying Assumption~\ref{Assum1} can be found in~\cite{Delcey2018Periodic,Hakkaev2019Generation,Bengel2025Existence}.

Let us fix a stationary solution $\psi_0 \in H^2(\T)$ of~\eqref{eq:LLE} for $\eps = 0$ which satisfies Assumption~\ref{Assum1} and denote the family of shifts by $\{\bpsi_\sigma\}_{\sigma \in \T}$ and let $\bphi_\sigma^*:= \bphi_0^*(\cdot-\sigma)$. Then, $\El_0(\bpsi_\sigma)^*\bphi_\sigma^* = 0$ as well as $\langle \bpsi_\sigma',\bphi_\sigma^* \rangle_{L^2} =1$ for all $\sigma \in \R$. We now define the effective potential $\mathcal{V}_\eff \colon \R \to \R$ by setting
\begin{align}\label{eq:EffectivePotential}
    \mathcal{V}_\eff(\sigma) 
    := \int_\T V(y) \ \langle\bpsi_\sigma'(y), \bphi_\sigma^*(y)\rangle_{\R^2} \ \de y.
\end{align}
Note that $\mathcal{V}_\eff$ is $2\pi$-periodic and smooth. We recall from~\cite{Bengel2024Pinning} existence and stability results for stationary solution for $\eps>0$ that bifurcate from $\{\bpsi_\sigma\}_{\sigma\in \T}$ and demand the existence of a simple zero of $\mathcal{V}_\eff$.

\begin{Corollary}[\!{\!\cite[Thms.~3,8,9]{Bengel2024Pinning}}]\label{cor:ExistenceStationary}
    Let $\psi_0 \in H^2(\T)$ be a stationary solution of~\eqref{eq:LLE} for $\eps = 0$ that satisfies Assumption~\textup{\ref{Assum1}}. Assume that the effective potential $\mathcal{V}_\eff$ admits a simple zero $\sigma_*$. 
    Then there exists $\eps_0,C>0$ such that for all $\eps \in (0,\eps_0)$ the following holds.
    \begin{itemize}
        \item[(i)] There exists a stationary solution $\psi_{\sigma_*,\eps} \in H^2(\T)$ of~\eqref{eq:LLE} that satisfies the estimate
        \begin{align*}
            \|\psi_{\sigma_*,\eps} - \psi_{\sigma_*}\|_{H^2} \leq C \eps.
        \end{align*}
        Moreover, $\psi_{\sigma_*,\eps}$ is strongly spectrally stable if $\mathcal{V}_\eff'(\sigma_*) > 0$, that is
        $$
            \sigma(\El_\eps(\bpsi_{\sigma_*,\eps}))  \subset \{\lambda \in \C : \Re(\lambda) < -\eta\}
        $$
        for some $\eta>0$ and $\bpsi_{\sigma_*,\eps} = (\Re(\psi_{\sigma_*,\eps}),\Im(\psi_{\sigma_*,\eps}))$. Conversely, $\psi_{\sigma_*,\eps}$ is spectrally unstable if $\mathcal{V}_\eff'(\sigma_*) < 0$.
        \item[(ii)] If $\psi_{\sigma_*,\eps}$ is spectrally stable, then it is also locally asymptotically stable in $H^1(\T)$. 
        That is, there exist constants $\eta,C,\delta>0$ such that for all $u_0 \in H^1(\T)$ with 
        $$
            \|u_0 - \psi_{\sigma_*,\eps}\|_{H^1} < \delta
        $$ 
        there exists a unique global $H^1(\T)$-solution $u \in C([0,\infty),H^1(\T))$ of~\eqref{eq:LLE} that obeys the estimate
        \begin{align*}
            \|u(t) - \psi_{\sigma_*,\eps}\|_{H^1} \leq C \eu^{-\eta t} \|u_0 - \psi_{\sigma_*,\eps}\|_{H^1}  \qquad \text{for all } t \geq 0.
        \end{align*}
    \end{itemize}
\end{Corollary}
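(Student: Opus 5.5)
The plan is to reproduce the three ingredients behind Corollary~\ref{cor:ExistenceStationary}: a Lyapunov--Schmidt reduction for existence, perturbation of the simple zero eigenvalue for stability, and a semigroup-plus-bootstrap argument for nonlinear stability.

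\textbf{Existence.} Write the stationary equation as $F(\ub,\eps) := -dJ\ub_{xx} + \eps V\ub_x + N(\ub) = 0$, with $F\colon H^2(\T)\times\R \to L^2(\T)$ smooth. Make the ansatz $\ub = \bpsi_\sigma + \eps \wb$, with $\sigma$ near $\sigma_*$ and the normalization $\langle \wb,\bphi_\sigma^*\rangle_{L^2} = 0$. Let $P_\sigma$ be the spectral projection onto $\Ran \El_0(\bpsi_\sigma)$ along $\ker = \mathrm{span}\{\bpsi_\sigma'\}$. By Assumption~\ref{Assum1}(ii), $\El_0(\bpsi_\sigma)$ is invertible from $\ker(\bphi_\sigma^*)^\perp\cap H^2$ onto $\Ran$. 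Dividing $F$ by $\eps$ gives
\[
  0=\El_0(\bpsi_\sigma)\wb + V\bpsi_\sigma' + O(\eps).
\]
The range part is solved for $\wb=\wb(\sigma,\eps)$ by the implicit function theorem. The bifurcation equation, obtained by pairing with $\bphi_\sigma^*$, reads
\[
  0=\mathcal{V}_\eff(\sigma) + O(\eps).
\]
Since $\sigma_*$ is a simple zero, the scalar implicit function theorem yields $\sigma(\eps)=\sigma_*+O(\eps)$. This gives the $H^2$ estimate $\|\psi_{\sigma_*,\eps}-\psi_{\sigma_*}\|_{H^2}\le C\eps$.

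\textbf{Spectral stability.} The operator $\El_\eps(\bpsi_{\sigma_*,\eps})$ is an $O(\eps)$ perturbation of $\El_0(\bpsi_{\sigma_*})$, but the perturbation is relatively bounded of order one, through the term $\eps V\partial_x$. So I would first show the following: on the region $\{\Re\lambda\ge -\rho/2\}\setminus B_{r}(0)$, the resolvent stays bounded uniformly in $\eps$. For $|\lambda|$ large this uses an energy estimate on $(\lambda-\El_\eps)\ub = \mathbf{f}$ in $H^1$. The term $\eps V\partial_x$ contributes only $\eps\|V'\|_\infty\|\ub\|^2$ after integration by parts, and the damping $-\ub$ dominates it. For bounded $\lambda$, use a Neumann series relative to $\El_0$; this works because $\eps V\partial_x(\lambda-\El_0)^{-1}$ has norm $O(\eps)$ in $L^2$, since the resolvent maps into $H^2$.

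Inside $B_r(0)$ a single eigenvalue $\lambda(\eps)$ remains, by Kato's perturbation theory for the Riesz projection. Expanding with the eigenvector $\bpsi'+\eps\cdot$ and pairing with $\bphi^*$ gives
\[
  \lambda(\eps) = -\eps\,\mathcal{V}_\eff'(\sigma_*) + O(\eps^2).
\]
The identification of the derivative uses differentiating the bifurcation equation in $\sigma$. The sign of $\mathcal{V}_\eff'(\sigma_*)$ then decides stability or instability. This expansion computation, and especially matching it exactly with $\mathcal{V}_\eff'$, is the delicate algebraic step.

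\textbf{Nonlinear stability.} I expect the main obstacle to be obtaining semigroup decay $\|e^{t\El_\eps}\|_{H^1\to H^1}\le Ce^{-\eta t}$, because the operator is not sectorial-smoothing. I would use the Gearhart--Prüss theorem on the Hilbert space $H^1$: the resolvent bound along $\Re\lambda=-\eta$ follows from the previous step. In particular, the high-frequency energy estimate gives bounds uniform in $\Im\lambda$, because $-dJ\partial_x^2$ is skew.

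Then write $u=\psi_{\sigma_*,\eps}+v$. The nonlinearity is cubic-polynomial and locally Lipschitz $H^1\to H^1$ with quadratic remainder, since $H^1(\T)$ is an algebra. Local well-posedness in $H^1$ comes from treating $\eps V\partial_x$ inside the linear part, as above. A Duhamel bootstrap,
\[
  \|v(t)\|_{H^1}\le Ce^{-\eta t}\|v_0\|_{H^1}+C\int_0^t e^{-\eta(t-s)}\|v(s)\|_{H^1}^2\,\de s,
\]
closes for $\delta$ small. This gives global existence and exponential decay, with $\eta$ replaced by a slightly smaller rate.
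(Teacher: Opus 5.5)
The paper does not prove this statement itself. It imports it from \cite[Thms.~3, 8, 9]{Bengel2024Pinning}, and the parts it reuses later show the same architecture as your outline: the expansion $\lambda_\eps=-\eps\mathcal{V}_\eff'(\sigma_*)+O(\eps^2)$ in Lemma~\ref{lem:perturbation_eigenvalue}(b), the uniform resolvent bound \cite[Lem.~19]{Bengel2024Pinning}, and Gearhart--Pr\"uss in $L^2$ lifted to $H^1$ by interpolation \cite[Lem.~22]{Bengel2024Pinning}. Your Lyapunov--Schmidt step is fine. So is the eigenvalue identification you flag as delicate: differentiating the corrector equation $\El_0(\bpsi_{\sigma_*})\wb_1=-V\bpsi_{\sigma_*}'$ in $x$ and pairing with $\bphi_{\sigma_*}^*$ gives $\lambda(\eps)=-\eps\langle V'\bpsi_{\sigma_*}',\bphi_{\sigma_*}^*\rangle_{L^2}+O(\eps^2)=-\eps\mathcal{V}_\eff'(\sigma_*)+O(\eps^2)$. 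Running Gearhart--Pr\"uss directly in $H^1$ is also harmless, since commuting with $\partial_x$ only produces the small term $\eps V'\partial_x$ and a bounded term.

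The step that fails as written is the high-frequency resolvent bound. You need it twice. First, to rule out eigenvalues with large $|\lambda|$ in $\{\Re\lambda\ge-\eta\}$: Kato controls only compact $\lambda$-sets, and your Neumann series cannot be extended because $\|\eps V\partial_x(\lambda-\El_0)^{-1}\|_{L^2\to L^2}$ grows like $\eps|\Im\lambda|^{1/2}$ near the resonances. Second, as the input to Gearhart--Pr\"uss.

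Testing $(\lambda-\El_\eps)\ub=\mathbf f$ against $\ub$ does not close. The damping beats the $\tfrac{\eps}{2}V'$ produced by $\eps V\partial_x$. It does not beat the symmetric part of $\partial_\ub N(\bpsi)=-I+J(\omega-|\bpsi|^2)-2J\bpsi\bpsi^\top$, whose eigenvalues are $-1\pm|\bpsi|^2$. Nothing in Assumption~\ref{Assum1} forces $\|\bpsi\|_\infty<1$, and for bright pulses $\|\bpsi\|_\infty^2$ is typically well above $1$. So the real part of the identity only controls $\Re\lambda>\|\bpsi\|_\infty^2-1+O(\eps)$. The imaginary part gives nothing uniform in $\Im\lambda$, because $\iu J$ is indefinite; skewness of $-dJ\partial_x^2$ alone is not enough.

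The missing idea is non-resonance of the anti-linear coupling. In complex form, the zero-order part is $-1-\iu(\omega-2|\psi|^2)$ (damping plus a skew term) together with $v\mapsto\iu\psi^2\bar v$. Complexify with independent $(v_1,v_2)$ in place of $(v,\bar v)$. The diagonal blocks $\pm\iu d\partial_x^2+\eps V\partial_x-1\mp\iu(\omega-2|\psi|^2)$ then resonate at opposite signs of $\Im\lambda$. For $|\Im\lambda|$ large:
\begin{itemize}
\item one block has resolvent $O(|\Im\lambda|^{-1})$, from the imaginary part of its energy identity, with $\eps V\partial_x$ absorbed by $|d|\|\partial_x\,\cdot\,\|^2$;
\item the other block has resolvent $O(1)$, from the real part, which is exactly where your integration-by-parts argument belongs.
\end{itemize}
A Schur-complement argument then gives a bound uniform in small $\eps$, with an $\eps$-independent cutoff in $|\lambda|$. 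With that inserted, the rest of your plan goes through, including the Duhamel bootstrap with $\delta$ allowed to depend on $\eps$ (the decay rate is $O(\eps)$).
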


We are now ready to formulate the main result of the present paper. 

\begin{Theorem}\label{thm:main}
Let $\psi_0\in H^2(\T)$ be a stationary solution to~\eqref{eq:LLE} for $\eps=0$ which satisfies Assumption~\textup{\ref{Assum1}}. Then there exist constants $\eps_0,C_0,C_1,\kappa,K>0$ such that for any $\eps \in (0,\eps_0)$ the following holds. Let $v_0 \in H^1(\T)$ with
$$
    \|v_0\|_{H^1} \leq C_0 \eps
$$
and let $\sigma_0 \in \R$. 
\begin{itemize}
    \item[(i)] \textbf{Modulational dynamics:} There exists a unique global $H^1(\T)$-solution $u\in C([0,\infty), H^1(\T))$ to~\eqref{eq:LLE} with initial condition 
    $$
        u_0 = \psi_{\sigma_0} + v_0
    $$
    and it admits the decomposition
    \begin{align*}
        u(t) = \psi_{\sigma(t)} + v(t), \qquad
        \|v(t)\|_{H^1} \leq C_1 \eps \qquad \text{for all } t \geq 0.
    \end{align*}
    Here, $\sigma \in C^1([0,\infty),\R)$ and satisfies 
    \begin{align}\label{eq:ODEsigma}
        |\dot\sigma(t) + \eps \mathcal{V}_\eff(\sigma(t))| \leq C_1 \eps^2 \qquad \text{for all } t \geq 0
    \end{align}
    and $|\sigma(0) - \sigma_0| \leq C_1\eps$.
    \item[(ii)]\textbf{Asymptotic stability:} Let $\sigma_* \in \R$ be a simple zero of $\mathcal{V}_\eff$ with $\mathcal{V}_\textup{eff}(\sigma_*) = 0$ and $\mathcal{V}_\textup{eff}'(\sigma_*)> 0$. We define 
    $$
        \mathcal{I}_{\sigma_*} := \bigcup\Big\{(\sigma_*-\delta_-,\sigma_*+\delta_+) :  
        \begin{array}{c}
            \delta_-,\delta_+ > 0,\ \mathcal{V}_\eff^{-1} ([-2C_1\eps,2C_1\eps]) \cap (\sigma_*-\delta_-,\sigma_*+\delta_+) \\
            \text{ is connected for all } \eps < \eps_0      
        \end{array}
        \Big\}.
    $$
    Let further $\psi_0$ be an even function and $\sigma(0) \in \mathcal{I}_{\sigma_*}$. We denote by $\psi_{\sigma_*,\eps}$ the stationary solution of~\eqref{eq:LLE} bifurcating from $\psi_{\sigma_*}$ given in Corollary~\textup{\ref{cor:ExistenceStationary}}. Then, we obtain the asymptotic convergence 
    \begin{align*}
        \|u(t) - \psi_{\sigma_*,\eps}\|_{H^1} \leq C_1\frac{\eu^{K\eps^{-1}} }{\eps}  \eu^{-\eps\kappa t} \qquad \text{for all } t \geq 0.
    \end{align*}
\end{itemize}
\end{Theorem}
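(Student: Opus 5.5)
The plan is to follow the renormalization group strategy of~\cite{Promislow2002Renormalization}. The one change is that the transport term $\eps V\partial_x$ goes into the linear part rather than being treated as an inhomogeneity, because it loses a derivative in $H^1$.

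\textbf{Step 1 (linear estimates).} Let $P_\sigma$ be the spectral projection onto $\ker\El_0(\bpsi_\sigma)$, given by $P_\sigma \mathbf{w}=\langle \mathbf{w},\bphi_\sigma^*\rangle\bpsi_\sigma'$, and set $Q_\sigma = I-P_\sigma$. I would prove that the evolution $S_{\eps,\sigma}(t,s)$ generated by $Q_\sigma(\El_0(\bpsi_\sigma)+\eps V\partial_x)Q_\sigma$ on $\Ran Q_\sigma$ satisfies
\[
\|S_{\eps,\sigma}(t,s)\|_{H^1\to H^1}\le M\eu^{-\rho'(t-s)},
\]
with $M$ and $\rho'$ uniform in $\sigma$ and in small $\eps$. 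I would get this from a weighted energy estimate. Integration by parts gives $\langle \eps V\partial_x \mathbf{w},\mathbf{w}\rangle_{H^1}=O(\eps)\|\mathbf{w}\|_{H^1}^2$, since the worst term is $\int V \mathbf{w}_x\cdot \mathbf{w}_{xx}=-\tfrac12\int V'|\mathbf{w}_x|^2$. I would combine this with an equivalent $H^1$ norm in which the semigroup $\eu^{\El_0 t}Q_\sigma$ is contractive up to the rate $\rho'$. Such a norm comes from the Gearhart--Pr\"uss/Lyapunov construction $\int_0^\infty \|\eu^{\El_0\tau}Q\mathbf{w}\|^2 \de\tau$, or alternatively from the nonlinear stability proofs of~\cite{Stanislavova2018Asymptotic,Haragus2024Nonlinear}. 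Time dependence of $\sigma$ is handled by freezing $\sigma$ on unit time intervals and using the smooth dependence of $P_\sigma$ on $\sigma$.

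\textbf{Step 2 (modulation, part (i)).} Write $\ub=\bpsi_{\sigma(t)}+\vb$ with $\langle \vb,\bphi^*_{\sigma(t)}\rangle=0$. By the implicit function theorem this is well defined whenever $\vb$ is small. Projecting the equation onto $\bphi^*_\sigma$ gives
\[
\dot\sigma\,(1+O(\|\vb\|)) = -\eps\langle V\bpsi_\sigma',\bphi_\sigma^*\rangle + O(\eps\|\vb\|_{H^1}+\|\vb\|_{H^1}^2).
\]
Here the term $\eps\langle V\vb_x,\bphi_\sigma^*\rangle$ is harmless after moving the derivative onto the smooth $\bphi_\sigma^*$. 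This yields~\eqref{eq:ODEsigma} as soon as $\|\vb\|_{H^1}\lesssim\eps$. The $Q$-component satisfies
\[
\vb_t = Q_\sigma(\El_0+\eps V\partial_x)\vb + \eps Q_\sigma V\bpsi_\sigma' + O(\|\vb\|^2) + O(|\dot\sigma|\,\|\vb\|).
\]
Duhamel's formula with Step 1 gives $\|\vb(t)\|\le M\eu^{-\rho' t}\|\vb(0)\|+C\eps+C\|\vb\|^2_{L^\infty_t H^1}$. A bootstrap closes this with $\|\vb\|\le C_1\eps$ for all times, which also gives global existence. Note that the forcing $\eps Q V\bpsi'$ genuinely produces an $O(\eps)$ term, and this is why one cannot do better than $C_1\eps$ here. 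Finally, $|\sigma(0)-\sigma_0|\lesssim\eps$ follows from the implicit function theorem.

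\textbf{Step 3 (part (ii)).} On $\mathcal I_{\sigma_*}$, the estimate~\eqref{eq:ODEsigma} together with the connectedness condition forces $\sigma(t)$ to move monotonically into the set $\{|\mathcal V_\eff|\le 2C_1\eps\}$, which is an $O(\eps)$-neighbourhood of $\sigma_*$. It gets there by a time $T_1\lesssim K/\eps$ and never leaves afterwards. At that point $\|u-\psi_{\sigma_*,\eps}\|_{H^1}\le C\eps$, but with an uncontrolled constant.

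The main obstacle is the upgrade to $C\eps^{3/2}$, so that the solution enters the $\delta$-basin of Corollary~\ref{cor:ExistenceStationary}(ii). I would linearize instead around $\psi_{\sigma,\eps}$-type profiles, meaning $\bpsi_\sigma+\eps\bpsi^{(1)}_\sigma$ with the $O(\eps)$ correction solving $Q\El_0\bpsi^{(1)}=-QV\bpsi_\sigma'$. This removes the $O(\eps)$ forcing, leaving the new remainder $\wb$ forced only at order $O(\eps^2)+O(\eps|\sigma-\sigma_*|)$. The dangerous terms are then the quadratic ones $\partial^2N(\bpsi)[\wb,\wb]$ projected onto $\bphi^*$, which drive $\sigma$ at order $\|\wb\|^2\sim\eps^2$. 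Against the weak restoring rate $\eps\mathcal V_\eff'(\sigma_*)$, such a drive would only keep $\sigma-\sigma_*=O(\eps)$. Evenness of $\psi_0$ is what rescues this. It makes $\bpsi_{\sigma_*}'$ and $\bphi^*_{\sigma_*}$ odd, and if the leading part of $\wb$ is even, those quadratic contributions vanish to leading order. So I would split $\wb$ into even and odd parts about $\sigma_*$ and track them separately in the bootstrap. The odd part is forced only by $(\sigma-\sigma_*)$-terms, and the error in $\dot\sigma$ becomes $O(\eps^{5/2})$. The linear equation $\dot\sigma\approx-\eps\mathcal V_\eff'(\sigma_*)(\sigma-\sigma_*)$ then contracts $\sigma-\sigma_*$ from $O(\eps)$ to $O(\eps^{3/2})$ over a time of order $|\log\eps|/\eps$, and Step 1 keeps $\|\wb\|$ at order $\eps^{3/2}$. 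After this, Corollary~\ref{cor:ExistenceStationary}(ii) gives exponential convergence with rate $\eta\gtrsim\eps$. Combining the times $T_1\lesssim K/\eps$ and $|\log\eps|/\eps$ produces the prefactor $\eu^{K/\eps}/\eps$ in the final estimate.
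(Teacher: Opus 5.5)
\textbf{Step 1.} This step has a genuine gap. Uniform decay of the linear flow with $\eps V\partial_x$ included is exactly where the derivative loss must be overcome, and your mechanism does not overcome it. The cancellation $\langle \eps V\partial_x\wb,\wb\rangle_{H^1}=O(\eps)\|\wb\|_{H^1}^2$ holds in the standard $H^1$ inner product. Contractivity of $\eu^{\El_0 t}Q_\sigma$, however, holds only in the Lyapunov inner product $\langle \wb_1,\wb_2\rangle_*=\int_0^\infty\langle \eu^{\El_0\tau}\wb_1,\eu^{\El_0\tau}\wb_2\rangle_{H^1}\,\de\tau$. To combine the two you would need $\int_0^\infty\langle \eu^{\El_0\tau}V\partial_x\wb,\eu^{\El_0\tau}\wb\rangle_{H^1}\,\de\tau\ls\|\wb\|_*^2$. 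But $V\partial_x$ does not commute with $\eu^{\El_0\tau}$. The commutator $[-dJ\partial_x^2,V\partial_x]=-dJ(2V'\partial_x^2+V''\partial_x)$ is of second order, and the Schr\"odinger-type group has no smoothing on $\T$ to absorb it. So the integration by parts is lost and you are back to the derivative loss. Sup-type adapted norms do not help either, since they are not Hilbertian and give no energy identity. The paper never splits the operator this way. It uses the spectral projections $\Pi^{\textup{s}}_{\sigma,\eps}$ of the full operator $\El_\eps(\bpsi_\sigma)$ (Lemma~\ref{lem:perturbation_eigenvalue}) and imports $\eps$- and $\sigma$-uniform $L^2$ resolvent bounds on $\{\Re\lambda\geq-\rho/2\}$ from~\cite{Bengel2024Pinning}. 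It then reruns the Gearhart--Pr\"uss argument with tracked constants and interpolates to $H^1$ (Lemma~\ref{lem:LinearDecay}). If you grant a bound of this kind, your global moving-frame modulation in Step 2 is a reasonable alternative to the paper's renormalization-group iteration with projections frozen at $\sigma_0$.

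\textbf{Step 3.} The mechanism you give for the $\eps^{3/2}$ upgrade is misidentified.
\begin{itemize}
\item The $\|\wb\|^2$ terms are not the obstruction. In your moving frame, once the $O(\eps)$ forcing is removed, $\wb$ relaxes at the fast rate to $O(\eps^2)$. These terms therefore displace $\sigma$ by only $O(\eps^2)$ in total.
\item The even/odd splitting cannot deliver the cancellation. Neither $V$ nor $v_0$ is symmetric about $\sigma_*$. So the odd part of $\wb$ is $O(\eps)$ initially and is forced at $O(\eps^2)$, not only through $\sigma-\sigma_*$.
\item Most seriously, the target is wrong. The reduced equation contains $\eps^2$-terms such as $\eps^2\langle V\partial_x\bpsi^{(1)}_\sigma,\bphi^*_\sigma\rangle$ and $\tfrac{\eps^2}{2}\langle\partial_\ub^2N(\bpsi_\sigma)[\bpsi^{(1)}_\sigma]^2,\bphi^*_\sigma\rangle$, which do not vanish at $\sigma_*$. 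Hence $\sigma(t)$ settles at the position of $\psi_{\sigma_*,\eps}$, which is generically $O(\eps)$ away from $\sigma_*$. Your claimed $O(\eps^{5/2})$ error in $\dot\sigma$ and the contraction of $\sigma-\sigma_*$ to $O(\eps^{3/2})$ do not hold in general.
\end{itemize}

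The paper instead linearizes about the exact stationary state, writing $\ub=\vb+\bpsi_\varsigma-\bpsi_{\sigma_2}+\bpsi_{\sigma_*,\eps}$ with $\bpsi_{\sigma_*,\eps}-\bpsi_{\sigma_2}\in X^{\textup{s}}_\eps$ and $\vb\in X^{\textup{s}}_\eps$. Then every forcing term vanishes at $(\varsigma,\vb)=(\sigma_2,0)$, and the center direction is governed by the exact eigenvalue $\lambda_\eps=-\lambda_1\eps+O(\eps^2)$. In that frame the real obstruction (Remark~\ref{rem:FailedEstimate}) is the self-interaction $\tfrac12(\varsigma-\sigma_2)^2\langle\partial_\ub^2N(\bpsi_0)[\bpsi_0']^2,\bphi_0^*\rangle$ of the slow mode. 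It would compete with $\eps\lambda_1(\varsigma-\sigma_2)$ when $\varsigma-\sigma_2=O(\eps)$, but its coefficient vanishes. The paper derives this from evenness; it also follows by pairing $\El_0(\bpsi_0)\bpsi_0''=-\partial_\ub^2N(\bpsi_0)[\bpsi_0']^2$ with $\bphi_0^*$. Combined with the a priori bound $|\varsigma-\sigma_2|\ls\eps$ inherited from part (i), running for time $|\log\eps|/(2\eps\kappa)$ gives the $\eps^{3/2}$ bound.

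Finally, you invoke Corollary~\ref{cor:ExistenceStationary}(ii), but its basin radius is not quantified in $\eps$, so entering it is not guaranteed. The paper closes a direct bootstrap with $\|S_\eps(t)\|_{H^1\to H^1}\ls\eu^{-\eps\kappa t}$ instead. This works precisely because $\eps^{-1}(\eps^{3/2})^2\ll\eps^{3/2}$.
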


\begin{Remark}
If all assumptions of Theorem~\ref{thm:main} are satisfied, we can combine the statements (i) and (ii) to obtain a global description of the solution. Then for all $t \ls \eps^{-2} - 2 \log(\eps)/(\eps\kappa)$, part~(i) yields that the solution evolves according to~\eqref{eq:ODEsigma} which, since $\sigma(0) \in \mathcal{I}_{\sigma_*}$ holds, transports the periodic wave to the neighborhood of $\psi_{\sigma_*,\eps}$. For $ \eps^{-2} - 2 \log(\eps)/(\eps\kappa) \ls  t$ the estimate in (ii) is sharper than the solution bound in (i) and yields that $u$ converges asymptotically fast towards $\psi_{\sigma_*,\eps}$ at a slow exponential rate.
\end{Remark}

\begin{Remark}
If $\mathcal{V}_\eff$ has no zero, then the ODE $\dot\sigma = -\eps \mathcal{V}_\eff(\sigma)$ has no equilibrium. Thus, $\sigma$ is strictly monotone and since we are solving~\eqref{eq:LLE} on the torus, the solution $u$ is to leading order time-periodic.
In particular, $u$ does not converge to a stationary solution of~\eqref{eq:LLE} as $t \to \infty$.
\end{Remark}

\begin{Remark}
Let $V(x)=a+b\cos(x)$ with $a,b\in\R$ and $b\neq0$. By~\cite[Rem.~13]{Bengel2024Pinning}, the effective potential is generically of the form $\mathcal{V}_\eff(\sigma)=a+\beta\cos(\sigma)$ with $\beta \neq 0$. We compute $\mathcal{I}_{\sigma_*}$ for this specific example. Assume for simplicity that $\beta>0$, let $|a|<\beta$ and set $\theta:=\arccos(-a/\beta)$. Then $\sigma_*=2\pi-\theta$ is the unique zero of $\mathcal{V}_\eff$ on $\T$ satisfying $\mathcal{V}_\eff'(\sigma_*)>0$. Set $\eta_0:=2C_1\eps_0>0$ and assume that $\eta_0<\beta-|a|$. The subset of $\mathcal{V}_\eff^{-1}([-\eta_0,\eta_0])$ containing the unstable zero $\theta$ is given by $[\theta_-,\theta_+]$, where $\theta_-=\arccos((\eta_0-a)/\beta)$ and $\theta_+=\arccos(-(\eta_0+a)/\beta)$. Consequently, $ \mathcal{I}_{\sigma_*}= \T \setminus [\theta_-,\theta_+]_{\sim}$, where $\sim$ indicates the identification modulo $2\pi$. In particular, $\mathcal{I}_{\sigma_*}$ is not a small $O(\eps)$-neighborhood of $\sigma_*$.
\end{Remark}

\begin{Remark}
The parity assumption in (ii) imposed on $\psi_0$ is used in the proof of Lemma~\ref{lem:estimates2} and helps us to improve an estimate due to the cancellation of $\int_\T \partial_\ub^2 N(\bpsi_0) [\bpsi_0']^2 \overline{\bphi_0^*} \de x =0$. 
\end{Remark}

\subsection{Organization of paper}
The remainder of this paper is organized as follows. Section~\ref{sec:numeric} contains numerical simulations illustrating our main result. In Section~\ref{sec:linear} we derive a spectral decomposition of the linearized operators $\El_\eps(\bpsi_\sigma)$ and $\El_\eps(\bpsi_{\sigma_*,\eps})$ and prove uniform linear decay estimates for the corresponding semigroups. In Section~\ref{sec:ModulationDynamics} we present the proof of Theorem~\ref{thm:main}. Part (i) is established in Section~\ref{sec:ModulationDynamics}. The proof of part (ii) is presented in Section~\ref{sec:AsmyptoticConvergence}, which is again divided into an analysis in the intermediate time regime (Section~\ref{sec:IntermediatRegime}) and in the asymptotic time regime (Section~\ref{sec:AsymptoticRegime}). Finally, Appendix~\ref{sec:Append_IFT} contains a version of the implicit function theorem with uniform constants which is needed for the proofs. 

\section{Numerical simulations}\label{sec:numeric}
In this section, we present numerical simulations to illustrate the two dynamical scenarios described by Theorem~\ref{thm:main}: persistent spatially dependent drift when the effective equation has no equilibrium and trapping when it admits a stable equilibrium. We consider both the focusing and defocusing regimes. We first computed stable bright (for $d>0$) and dark (for $d<0$) stationary pulse solutions to~\eqref{eq:LLE} with $\eps = 0$ using numerical bifurcation methods. In the focusing case, we obtained a stable pulse for $d= 0.1, \omega = 4.8$ and $f=2$, while in the defocusing case, we obtained a stable pulse for $d=-0.1, \omega = 3.5$ and $f =2$. We imposed symmetry constraints to ensure that both solutions are even in $x$ and verified that they satisfy Assumption~\ref{Assum1}. We then used these stationary pulses as initial data for the time integration of~\eqref{eq:LLE}. In each simulation, we compare the observed pulse position with the solution of the reduced equation $\dot \sigma = - \eps \mathcal{V}_\eff(\sigma)$.

\begin{figure}[!b]
    \centering
    \includegraphics[width=\textwidth]{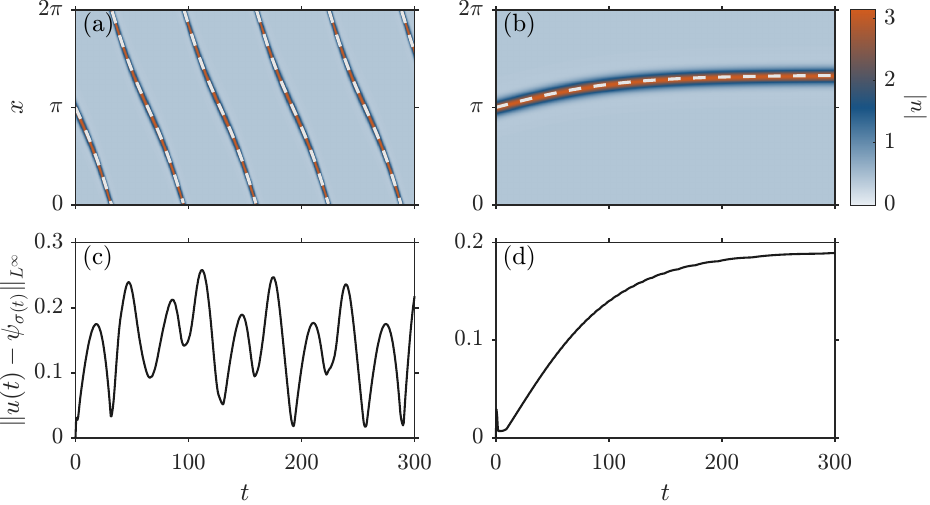}
    \caption{Panels (a) and (b) show the space-time evolution $|u(x,t)|$ starting from a stable pulse in the focusing case for two choices of the external potential: $\eps V(x) = 0.1+0.02\cos(x)$ in the left column, and $\eps V(x) = 0.01+ 0.02\cos(x)$ in the right column. The dashed line indicates the position computed from the ODE $\dot\sigma(t) = - \eps \mathcal{V}_\eff(\sigma(t))$. Panels (c) and (d) show the $L^\infty$-error $\|u(t)-\psi_{\sigma(t)}\|_{L^\infty}$ which is controlled by Theorem~\ref{thm:main}. Parameters used in this simulation are $d=0.1, \omega = 4.8, f=2$.} 
    \label{fig1}
\end{figure}

Figure~\ref{fig1} presents the results for the bright pulse in the focusing case. In the left column, we set $\eps V(x) = 0.1+0.02\cos(x)$. For this choice, the numerically computed effective potential has no zeros. Panel (a) shows the space-time evolution of the solution, which exhibits a nonuniform spatially dependent drift around the torus over the full simulation time. The dashed line indicates the solution $\sigma$ of the reduced ODE and closely follows the numerically observed pulse position. Panel~(c) shows that the error $\|u(t) - \psi_{\sigma(t)}\|_{L^\infty}$ remains small and oscillatory. In the right column, we set $\eps V(x) = 0.01+0.02\cos(x)$. Here, the numerically computed effective potential admits a stable zero $\sigma_*$ and hence the reduced equation predicts transport towards $\sigma_*$. Panel~(b) confirms this prediction as the pulse initially drifts and subsequently becomes trapped at $\sigma_*$. Once again, the reduced trajectory closely agrees with the observed pulse position. As the pulse relaxes towards the stationary state, the error visible in panel (d) approaches the plateau $\|\psi_{\sigma_*}- \psi_{\sigma_*,\eps}\|_{L^\infty}$.

\begin{figure}[!t]
    \centering
    \includegraphics[width=\textwidth]{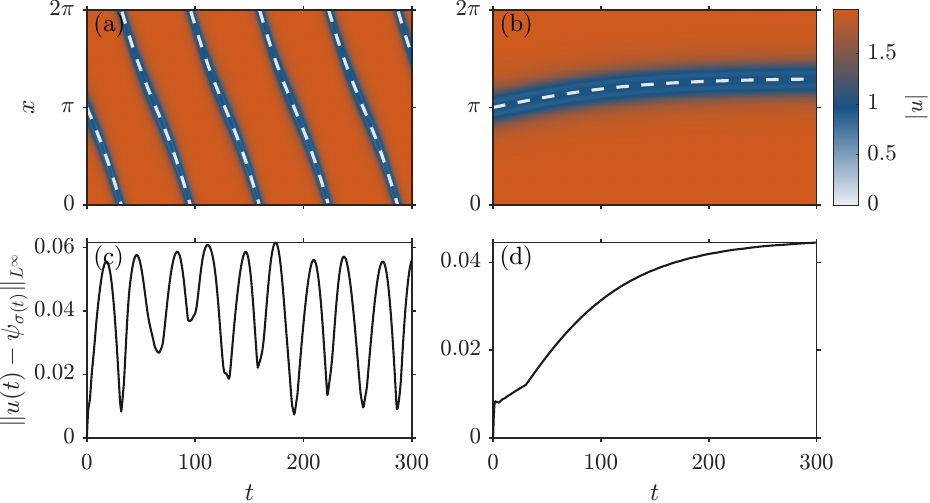}
    \caption{Same as in Figure~\ref{fig1} but for the defocusing case. Parameters are $d=-0.1, \omega = 3.5, f=2$ and $\eps V(x) = 0.1+0.02\cos(x)$ (left column), $\eps V(x) = 0.01+ 0.02\cos(x)$ (right column).} 
    \label{fig2}
\end{figure}

Figure~\ref{fig2} presents the simulation results for the defocusing case and with the dark pulse as initial data. As before, only the potential $V$ is varied with the choice $\eps V(x) = 0.1 + 0.02 \cos(x)$ for the left column and $\eps V(x) = 0.01 + 0.02 \cos(x)$ for the right column. Panels~(a) and~(c) show the oscillatory behavior of the dark pulse together with the approximation error in the case where the effective potential is sign-definite. Again, the position is accurately predicted by the effective ODE for $\sigma$. Panels~(b) and~(d) show that, when the effective potential possesses a stable zero, the dark pulse becomes trapped and approaches a stationary solution as $t \to \infty$.

\section{Spectral and linear analysis}\label{sec:linear}
Throughout this section, $\bpsi_0 \in H^2(\T)$ satisfies Assumption~\ref{Assum1}. In order to prove Theorem~\ref{thm:main}, we first need to establish some preliminary results on the spectrum of $\El_\eps(\bpsi_\sigma)$ as well as on linear estimates for the semigroup that is generated by $\El_\eps(\bpsi_\sigma)$. In addition to that, the proof of part (ii) of Theorem~\ref{thm:main} uses spectral information and linear decay estimates for the stationary solution of Corollary~\ref{cor:ExistenceStationary}. These results were established (in a slightly different form than needed here) in~\cite{Bengel2024Pinning}, and are therefore stated again in this section.

\subsection{Spectral analysis}
We start with the following perturbation result for the simple zero eigenvalue of $\El_0(\bpsi_\sigma)$.
\begin{Lemma}\label{lem:perturbation_eigenvalue}
    There exist $\eps_0,\delta_0>0$ such that for all $\sigma,\sigma_1,\sigma_2 \in \R$, $\eps \in [0,\eps_0)$ the following holds.
    \begin{itemize}
        \item[(a)] There exists $\bphi_{\sigma,\eps},\bphi_{\sigma,\eps}^* \in H^2(\T)$ and $\lambda_{\sigma,\eps} \in \R$ such that
        \begin{enumerate}
            \item $\sigma(\El_{\eps}(\bpsi_\sigma)) \cap B_{\delta_0}(0) = \sigma(\El_{\eps}(\bpsi_\sigma)^*) \cap B_{\delta_0}(0) = \{\lambda_{\sigma,\eps}\}$;
            \item $\El_{\eps}(\bpsi_\sigma) \bphi_{\sigma,\eps} = \lambda_{\sigma,\eps}\bphi_{\sigma,\eps}$ and $\El_{\eps}(\bpsi_\sigma)^* \bphi_{\sigma,\eps}^* = \lambda_{\sigma,\eps}\bphi_{\sigma,\eps}^*$;
            \item $|\lambda_{\sigma,\eps}|, \|\bpsi_\sigma' - \bphi_{\sigma,\eps}\|_{H^1}, \|\bphi_{\sigma}^* - \bphi_{\sigma,\eps}^*\|_{H^1} \ls \eps$;
            \item $\|\bpsi_{\sigma_1}' - \bpsi_{\sigma_2}'\|_{H^2}, \|\bphi_{\sigma_1}^* - \bphi_{\sigma_2}^*\|_{H^2} \ls |\sigma_1-\sigma_2|$;
            \item $\langle \bphi_{\sigma,\eps},\bphi_{\sigma,\eps}^* \rangle_{L^2}= 1$.
        \end{enumerate}
        \item[(b)] Assume in addition that $\sigma_*$ is a simple zero of $\mathcal{V}_\eff$ and denote by $\bpsi_{\sigma_*,\eps} \in H^2(\T)$ the associated stationary solution obtained in Corollary~\ref{cor:ExistenceStationary}.
        Then there exist $\bvphi_{\eps},\bvphi_{\eps}^* \in H^2(\T)$ and $\lambda_{\eps} = - \lambda_1 \eps + O(\eps^2) \in \R$, $\lambda_1 = \mathcal{V}_\eff'(\sigma_*)$ such that
        \begin{enumerate}
            \item $\sigma(\El_{\eps}(\bpsi_{\sigma_*,\eps})) \cap B_{\delta_0}(0) = \sigma(\El_{\eps}(\bpsi_{\sigma_*,\eps})^*) \cap B_{\delta_0}(0) = \{\lambda_{\eps}\}$;
            \item $\El_{\eps}(\bpsi_{\sigma_*,\eps}) \bvphi_{\eps} = \lambda_{\eps}\bvphi_{\eps}$ and $\El_{\eps}(\bpsi_{\sigma_*,\eps})^* \bvphi_{\eps}^* = \lambda_{\eps}\bvphi_{\eps}^*$;
            \item $ \|\bpsi_{\sigma_*}' - \bvphi_{\eps}\|_{H^2}, \|\bphi_{\sigma_*}^* - \bvphi_{\eps}^*\|_{H^2} \ls \eps$;
            \item $\langle \bvphi_{\eps},\bvphi_{\eps}^* \rangle_{L^2}= 1$.
        \end{enumerate}
    \end{itemize}
    Moreover, $\bphi_{\sigma,\eps},\bphi_{\sigma,\eps}^*,\bvphi_\eps,\bvphi_\eps^*$ depend smoothly on $\eps$.
\end{Lemma}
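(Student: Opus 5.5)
The proof rests on analytic perturbation theory for isolated eigenvalues, applied through Riesz projections. We treat $\El_\eps(\bpsi_\sigma)=\El_0(\bpsi_\sigma)+\eps V\partial_x$ as a relatively bounded perturbation of $\El_0(\bpsi_\sigma)$. Since $\El_0(\bpsi_\sigma)=-dJ\partial_x^2+\partial_\ub N(\bpsi_\sigma)$ has second-order principal part with $J$ invertible, we have $\|\partial_x \ub\|_{L^2}\le a\|\El_0 \ub\|_{L^2}+b\|\ub\|_{L^2}$ for every $a>0$. Hence $V\partial_x$ is $\El_0$-bounded with relative bound zero, and $\eps\mapsto \El_\eps(\bpsi_\sigma)$ is a holomorphic family of type (A).

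A key observation is that translation by $\sigma$ is a unitary map $T_\sigma$ on each $H^k(\T)$, and $T_\sigma^{-1}\El_0(\bpsi_\sigma)T_\sigma=\El_0(\bpsi_0)$. Consequently the resolvent bound $\|(\lambda-\El_0(\bpsi_\sigma))^{-1}\|_{L^2\to H^2}\le M$ on the circle $|\lambda|=\delta_0$ is independent of $\sigma$; here $\delta_0<\rho$ is chosen using Assumption~\ref{Assum1}(i). On this circle the Neumann series for $(\lambda-\El_\eps(\bpsi_\sigma))^{-1}=(\lambda-\El_0)^{-1}\big(I-\eps V\partial_x(\lambda-\El_0)^{-1}\big)^{-1}$ converges for $\eps<\eps_0$, with $\eps_0$ uniform in $\sigma$. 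The Riesz projection
\[
P_{\sigma,\eps}=\frac{1}{2\pi\iu}\oint_{|\lambda|=\delta_0}(\lambda-\El_\eps(\bpsi_\sigma))^{-1}\,\de\lambda
\]
then depends analytically on $\eps$, and $\|P_{\sigma,\eps}-P_{\sigma,0}\|_{L^2\to H^2}\ls\eps$ uniformly in $\sigma$. The projection has rank one because $P_{\sigma,0}$ does, by Assumption~\ref{Assum1}(ii). This gives item 1: exactly one eigenvalue lies in $B_{\delta_0}(0)$. The same statement for the adjoint follows from $P^*$, which is the Riesz projection of $\El_\eps^*$, with the eigenvalue conjugated.

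The eigenvalue $\lambda_{\sigma,\eps}$ is real. The operator has real coefficients in the real formulation, so its spectrum is symmetric under complex conjugation, and a simple eigenvalue that is alone in the disc must therefore be real. For the eigenfunctions we set $\bphi_{\sigma,\eps}=P_{\sigma,\eps}\bpsi_\sigma'$. This is nonzero and satisfies $\|\bphi_{\sigma,\eps}-\bpsi_\sigma'\|_{H^2}\ls\eps$, which is stronger than the required $H^1$ bound. For the adjoint we take $\tilde\bphi^*=P_{\sigma,\eps}^*\bphi_\sigma^*$ and normalize, setting $\bphi^*_{\sigma,\eps}=\tilde\bphi^*/\langle\bphi_{\sigma,\eps},\tilde\bphi^*\rangle$. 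The denominator equals $1+O(\eps)$ since $\langle\bpsi_\sigma',\bphi_\sigma^*\rangle=1$. This gives items 2, 3 and 5. The eigenvalue bound $|\lambda_{\sigma,\eps}|\ls\eps$ follows from $\lambda_{\sigma,\eps}=\langle \El_\eps\bphi_{\sigma,\eps},\bphi^*_{\sigma,\eps}\rangle$ and $\El_0\bpsi_\sigma'=0$. Item 4 is elementary. Since $\bpsi_0\in H^2$ solves the stationary equation, bootstrapping gives $\bpsi_0\in H^k$ for all $k$, and likewise $\bphi_0^*$. Then $\|g(\cdot-\sigma_1)-g(\cdot-\sigma_2)\|_{H^2}\le|\sigma_1-\sigma_2|\,\|g'\|_{H^2}$ for $g=\bpsi_0',\bphi_0^*$. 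Smooth dependence on $\eps$ is inherited from the analyticity of $P_{\sigma,\eps}$.

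Part (b) is handled in the same way, now perturbing $\El_0(\bpsi_{\sigma_*})$ by
\[
\El_\eps(\bpsi_{\sigma_*,\eps})-\El_0(\bpsi_{\sigma_*})=\eps V\partial_x+\partial_\ub N(\bpsi_{\sigma_*,\eps})-\partial_\ub N(\bpsi_{\sigma_*}).
\]
The second difference is an $O(\eps)$ bounded multiplication operator, by Corollary~\ref{cor:ExistenceStationary}(i) and $H^2\hookrightarrow L^\infty$. Smoothness in $\eps$ requires smooth dependence of $\bpsi_{\sigma_*,\eps}$ on $\eps$. This holds because it is constructed in~\cite{Bengel2024Pinning} via the implicit function theorem, which yields a smooth branch.

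The main obstacle is the expansion $\lambda_\eps=-\mathcal{V}_\eff'(\sigma_*)\eps+O(\eps^2)$. We write $\bpsi_{\sigma_*,\eps}=\bpsi_{\sigma_*}+\eps\bpsi_1+O(\eps^2)$, where $\bpsi_1$ solves
\[
\El_0(\bpsi_{\sigma_*})\bpsi_1=-V\bpsi_{\sigma_*}'.
\]
This equation is solvable precisely because $\mathcal{V}_\eff(\sigma_*)=0$, and $\bpsi_1$ is fixed modulo $\bpsi'_{\sigma_*}$. First-order perturbation theory then gives
\[
\lambda_\eps=\eps\big\langle \big(V\partial_x+\partial_\ub^2N(\bpsi_{\sigma_*})[\bpsi_1,\cdot]\big)\bpsi_{\sigma_*}',\bphi^*_{\sigma_*}\big\rangle+O(\eps^2).
\]
To identify this expression with $-\mathcal{V}_\eff'(\sigma_*)$, we differentiate in $\sigma$ two identities. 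The first is $\El_0(\bpsi_\sigma)\bpsi_{1,\sigma}=-V\bpsi_\sigma'+\mathcal{V}_\eff(\sigma)\bpsi_\sigma'$, where $\bpsi_{1,\sigma}$ is defined with the projection off the kernel. The second is $\El_0(\bpsi_\sigma)\partial_\sigma\bpsi_\sigma=0$, which gives $\partial_\sigma\El_0(\bpsi_\sigma)=-\partial_\ub^2N(\bpsi_\sigma)[\bpsi_\sigma',\cdot]$. Pairing the differentiated identity at $\sigma_*$ with $\bphi_{\sigma_*}^*$ should yield the claimed formula. Alternatively, this identification may be quoted directly from~\cite[Thm.~8]{Bengel2024Pinning}, which is what Corollary~\ref{cor:ExistenceStationary}(i) rests on. I would quote that result and include the computation only as a consistency check.
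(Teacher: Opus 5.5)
Your proof is correct, but for part~(a) it takes a different route from the paper.

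\textbf{The paper's route.} The paper applies its uniform implicit function theorem (Theorem~\ref{thm:IFT}) to $G(\vb,\lambda,\eps)=\El_\eps(\bpsi_\sigma)(\bpsi_\sigma'+\vb)-\lambda(\bpsi_\sigma'+\vb)$ on $\Ran(P)\times\R\times\R$. This follows Kielh\"ofer's treatment of a perturbed simple eigenvalue, and uniformity in $\sigma$ comes from controlling the IFT constants.

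\textbf{Your route.} You use Kato-type perturbation theory instead. You run a Neumann series for the resolvent on the fixed circle $|\lambda|=\delta_0$, which is uniform in $\sigma$ for two reasons: translation conjugates $\El_0(\bpsi_\sigma)$ isometrically to $\El_0(\bpsi_0)$ on every $H^k(\T)$, and $\|V\partial_x\|_{H^1\to L^2}$ does not depend on the shift. You then count the rank of the Riesz projection.

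\textbf{What each route buys.} Your route makes item~1 and the adjoint statement immediate, including that no other spectrum lies in $B_{\delta_0}(0)$ and that the eigenvalue is algebraically simple. It also gives analyticity in $\eps$, real eigenfunctions (the projection commutes with conjugation), and $H^2$ rather than $H^1$ bounds. The IFT route produces the eigenpair directly as a $C^1$ branch with explicit Lipschitz control. By itself, however, it only gives local uniqueness of the eigenpair near $(\bpsi_\sigma',0)$. Excluding other spectrum from $B_{\delta_0}(0)$ uniformly in $\sigma$ needs a further spectral argument on top of it.

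\textbf{Details to fill in.}
\begin{itemize}
\item The $H^1$ bound for $\bphi^*_{\sigma,\eps}-\bphi^*_\sigma$ needs the same Neumann-series argument for $\El_\eps(\bpsi_\sigma)^*=\El_0(\bpsi_\sigma)^*-\eps V\partial_x-\eps V'$. Duality of your $L^2\to H^2$ bound on $P_{\sigma,\eps}-P_{\sigma,0}$ only gives $L^2$ closeness, unless you add elliptic regularity from the eigenvalue equation.
\item In your consistency check, $\partial_\sigma\El_0(\bpsi_\sigma)=-\partial_\ub^2N(\bpsi_\sigma)[\bpsi_\sigma',\cdot]$ is simply the chain rule with $\partial_\sigma\bpsi_\sigma=-\bpsi_\sigma'$.
\item The freedom $\bpsi_1\mapsto\bpsi_1+c\,\bpsi_{\sigma_*}'$ is harmless. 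Differentiating $\El_0(\bpsi_0)\bpsi_0'=0$ in $x$ gives $\partial_\ub^2N(\bpsi_0)[\bpsi_0']^2=-\El_0(\bpsi_0)\bpsi_0''$, which is orthogonal to $\bphi_0^*$. With this, pairing the $\sigma$-derivative of your identity with $\bphi^*_{\sigma_*}$ does give $-\mathcal{V}_\eff'(\sigma_*)$.
\end{itemize}

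For part~(b), quoting~\cite{Bengel2024Pinning} for the expansion of $\lambda_\eps$ is exactly what the paper does.
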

\begin{proof}
    We start with the proof of part (a). The claims 1.-3.~for the operator $\El_\eps(\bpsi_\sigma)$ follow as in~\cite[Prop.~1.7.2]{Kielhoefer2012Introduction}. 
    However, to obtain uniform bounds in $\sigma$, we need to apply a version of the Implicit Function Theorem~\ref{thm:IFT} with uniform bounds, which is stated in Appendix~\ref{sec:Append_IFT}, to the function
    $$
        G \colon \Ran(P) \times \R \times \R \to L^2(\T), \quad
        G(\vb,\lambda,\eps):= \El_{\eps}(\bpsi_\sigma) (\bpsi_\sigma' + \vb) - \lambda (\bpsi_\sigma' + \vb),
    $$
    where the projection $P \colon L^2(\T) \to L^2(\T)$ is given by $P\ub = \ub - \langle \ub, \bphi_\sigma^* \rangle_{L^2} \bpsi_\sigma'$. Checking the hypothesis of Theorem~\ref{thm:IFT} is straightforward and thus we omit the details here. For the adjoint operator $\El_\eps(\bpsi_\sigma)^*$, we proceed similarly. Moreover, 4 is an application of the mean value theorem, where we note that $\psi_0$ is in fact $H^4$-smooth and $\bpsi_0^*\in H^3(\T)$. Assertion 5 is a normalization, which is possible since $\lambda_{\eps}$ is a simple eigenvalue.

    For part (b) we refer to~\cite[Sect.~5]{Bengel2024Pinning}.
\end{proof}

We define the spectral projections
\begin{align}\label{def:spec_proj}
\begin{split}
    &\Pi_{\sigma,\eps}^\textup{c}
    \colon L^2(\T) \to L^2(\T), \qquad
    \Pi_{\sigma,\eps}^\textup{c}\ub
    := \langle \ub,\bphi_{\sigma,\eps}^* \rangle_{L^2}\,\bphi_{\sigma,\eps}, \qquad
    \Pi_{\sigma,\eps}^\textup{s} := I - \Pi_{\sigma,\eps}^\textup{c},
    \\
    &\Pi_{\eps}^\textup{c}
    \colon L^2(\T) \to L^2(\T), \qquad
    \Pi_{\eps}^\textup{c}\ub
    := \langle \ub,\bvphi_{\eps}^* \rangle_{L^2}\,\bvphi_{\eps}, \qquad
    \Pi_{\eps}^\textup{s} := I - \Pi_{\eps}^\textup{c},
\end{split}
\end{align}
onto the central and stable spaces $\Ran(\Pi_{\sigma,\eps}^\textup{c})=:X_{\sigma,\eps}^\textup{c}$, $\Ran(\Pi_{\eps}^\textup{c})=:X_{\eps}^\textup{c}$ and $\Ran(\Pi_{\sigma,\eps}^\textup{s})=:X_{\sigma,\eps}^\textup{s}$, $\Ran(\Pi_{\eps}^\textup{s})=:X_{\eps}^\textup{s}$, respectively. Using the spectral projections, we will decompose the $C^0$-semigroups of the corresponding operators, which will be useful in the upcoming nonlinear analysis.

\subsection{Linear estimates}
In this section we derive exponential decay estimates for the linearized equations. 
To this end, let $\ub \in H^2(\T)$. 
It follows from~\cite[Sect.~5.9]{Bengel2024Pinning} that the operator $\El_\eps(\ub)$ generates a $C^0$-semigroup on $H_\per^s(0,2\pi)$ for $s=0,1,2$.
We denote the semigroup on $L^2(\T)$ by $\{\eu^{\El_\eps(\ub) t} \}_{t \geq 0}$ and note that the semigroup on $H_\per^s(0,2\pi)$ for $s=1,2$ can be obtained by restriction to the respective invariant spaces.
In case where the assumptions of Corollary~\ref{cor:ExistenceStationary} are satisfied, let $\bpsi_{\sigma_*,\eps}$ be a spectrally stable solution for $\eps$ sufficiently small.
For $\sigma \in \R$ we define the $C^0$-semigroups 
$$
    S_{\sigma,\eps}(t):= \eu^{\El_{\eps}(\bpsi_\sigma)t}\Pi_{\sigma,\eps}^\textup{s}
    \qquad\text{and}\qquad 
    S_\eps(t) = \eu^{\El_\eps(\bpsi_{\sigma_*,\eps}) t}.
$$
Then, the following exponential decay estimates hold.
\begin{Lemma}\label{lem:LinearDecay}
    There exists $\kappa \in (0,\rho)$, $\eps_0>0$ such that for all $\eps \in (0,\eps_0)$ and all $\sigma \in \R$ the linear estimates
    \begin{enumerate}
        \item $\|S_{\sigma,\eps}(t)\|_{H^1 \to H^1} \ls  \eu^{-\kappa t}$
        \item $\|S_{\eps}(t)\Pi_\eps^\textup{s}\|_{H^1 \to H^1} \ls  \eu^{-\kappa t}$
        \item $\|S_{\eps}(t)\|_{H^1 \to H^1} \ls  \eu^{-\eps\kappa t}$
    \end{enumerate}
    hold for all $t \geq 0$.
\end{Lemma}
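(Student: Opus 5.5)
The three estimates will all be derived from a single uniform-in-$(\eps,\sigma)$ resolvent bound on $H^1(\T)$, combined with the Gearhart--Prüss theorem. Since $H^1(\T)$ is a Hilbert space, it suffices to show three things:
\begin{itemize}
\item the semigroup restricted to the relevant invariant subspace satisfies a growth bound $\|\eu^{\El t}\|\le M\eu^{\omega_0 t}$ with $M,\omega_0$ independent of $\eps,\sigma$;
\item the resolvent exists on $\{\Re\lambda\ge -\kappa\}$ (or $\ge-\eps\kappa$);
\item the resolvent is bounded there, uniformly in $\eps,\sigma$.
\end{itemize}
A quantitative version of Gearhart--Prüss (e.g. via Plancherel, as in Helffer--Sjöstrand) then gives decay constants depending only on these bounds. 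Translation conjugates $\El_\eps(\bpsi_\sigma)$ to $-dJ\partial_x^2+\eps V(\cdot+\sigma)\partial_x+\partial_\ub N(\bpsi_0)$. So uniformity in $\sigma$ reduces to uniformity over the compact family of shifted potentials $\{V(\cdot+\sigma)\}$, whose $C^1$ norms are uniformly bounded.

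\textbf{Step 1: uniform growth bound via energy estimates.} For $\ub_t=\El_\eps(\bpsi_\sigma)\ub$ we compute $\frac{\de}{\de t}\|\ub\|_{H^1}^2$. The dispersive term $-dJ\partial_x^2$ is skew and drops out. The transport term contributes
$$\eps\langle V\ub_x,\ub\rangle=-\tfrac{\eps}{2}\langle V'\ub,\ub\rangle$$
in $L^2$, and similarly $-\tfrac{\eps}{2}\langle V'\ub_x,\ub_x\rangle+\eps\langle V'\ub_x,\ub_x\rangle$ at the derivative level. The key point is that no derivative is lost, because $V$ is real and scalar, so the transport term acts on both components alike. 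The potential term is bounded. Gronwall then gives $\|\eu^{\El_\eps t}\|_{H^1\to H^1}\le \eu^{Ct}$ uniformly, and the same holds for $\bpsi_{\sigma_*,\eps}$.

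\textbf{Step 2: uniform resolvent bounds.} This is the main obstacle. Split the region $\{\Re\lambda\ge-\kappa\}\setminus B_{\delta_0}(0)$ into two parts.

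For $|\lambda|\ge R$ large, I would prove $\|(\lambda-\El_\eps)^{-1}\|_{H^1\to H^1}\lesssim1$ directly from the equation $(\lambda-\El_\eps)\ub=\mathbf f$. Test with $\ub$ and $-\ub_{xx}$ and take real and imaginary parts. The real parts give $(\Re\lambda-C)\|\ub\|^2_{H^1}$ up to the $\eps$-terms, which are handled as in Step 1. The imaginary parts relate $|\Im\lambda|\|\ub\|$ to $|d|\|\ub_x\|^2$. Combining these yields a bound independent of $\eps$. Because the transport term is only $O(\eps)$ relative to the dispersion, I expect the standard large-$|\lambda|$ estimate for $\El_0$ to persist with the same constants.

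On the remaining compact set
$$K=\{-\kappa\le\Re\lambda,\ |\lambda|\le R,\ |\lambda|\ge\delta_0\},$$
I would use a Neumann series. Since $\El_0(\bpsi_0)$ has no spectrum in $K$ for $\kappa<\rho$, we have $\sup_K\|(\lambda-\El_0)^{-1}\|_{L^2\to H^2}<\infty$. The perturbation $\eps V(\cdot+\sigma)\partial_x$ is $O(\eps)$ as a map $H^2\to H^1\subset L^2$, so $(\lambda-\El_\eps)^{-1}=(\lambda-\El_0)^{-1}(I-\eps V\partial_x(\lambda-\El_0)^{-1})^{-1}$ exists with uniform bounds in $L^2\to H^2$, hence in $H^1\to H^1$.

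Near $0$ we restrict to $\Ran\Pi^{\mathrm s}_{\sigma,\eps}$. By Lemma~\ref{lem:perturbation_eigenvalue}, the only spectrum in $B_{\delta_0}(0)$ is $\lambda_{\sigma,\eps}$, which is removed by the projection. The reduced resolvent is analytic on $B_{\delta_0}(0)$, and by the Riesz formula and the uniform bounds on the circle $|\lambda|=\delta_0$ together with the maximum principle, it is uniformly bounded inside the disk. This gives estimate~1, and the same argument with $\Pi^{\mathrm s}_\eps$ gives estimate~2. Here $\bpsi_{\sigma_*,\eps}$ is an $O(\eps)$ $H^2$-perturbation of $\bpsi_{\sigma_*}$, so the relevant potential term changes by an $O(\eps)$ bounded operator.

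\textbf{Step 3: the slow rate in estimate~3.} Write
$$S_\eps(t)=\eu^{\lambda_\eps t}\Pi^{\mathrm c}_\eps+S_\eps(t)\Pi^{\mathrm s}_\eps.$$
By Lemma~\ref{lem:perturbation_eigenvalue}(b), $\lambda_\eps=-\lambda_1\eps+O(\eps^2)$ with $\lambda_1>0$. Moreover $\|\Pi^{\mathrm c}_\eps\|_{H^1\to H^1}\lesssim1$, since $\bvphi_\eps,\bvphi_\eps^*$ are uniformly bounded in $H^2$. Shrinking $\kappa\le\lambda_1/2$ and $\eps_0$ if necessary gives $\eu^{\lambda_\eps t}\le \eu^{-\eps\kappa t}$. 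Together with estimate~2 and $\eu^{-\kappa t}\le \eu^{-\eps\kappa t}$, this yields estimate~3.
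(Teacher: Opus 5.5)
\textbf{Gap: the large-$|\lambda|$ resolvent bound in Step~2.} You correctly flag this step as the main obstacle, but the energy argument you sketch does not close it.

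\emph{Imaginary parts.} In the complexified real system they give no control. The quantity $\Im\langle -dJ\ub_{xx},\ub\rangle_{L^2}=d\,\Im\langle J\ub_x,\ub_x\rangle_{L^2}$ is indefinite, because $J$ has eigenvalues $\pm\iu$ and the symbol $dk^2J$ has eigenvalues $\pm\iu dk^2$ on both halves of the imaginary axis. So there is no coercivity in $\|\ub_x\|$ and no gain in $|\Im\lambda|$. Your relation between $|\Im\lambda|\|\ub\|$ and $|d|\|\ub_x\|^2$ is the scalar-NLS intuition, and it ignores the coupling to $\bar v$.

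\emph{Real parts.} These do not help on the relevant strip either. The symmetric part of $\partial_\ub N(\bpsi_\sigma)$ is $-I-J(2\bpsi_\sigma\bpsi_\sigma^\top-|\bpsi_\sigma|^2I)$, with eigenvalues $-1\pm|\bpsi_\sigma|^2$. Hence $\Re\langle\partial_\ub N(\bpsi_\sigma)\ub,\ub\rangle$ is in general only bounded by $(\|\psi_0\|_{L^\infty}^2-1)\|\ub\|_{L^2}^2$, which is positive as soon as $\|\psi_0\|_{L^\infty}>1$, the typical case for pulses. So on $\{-\kappa\le\Re\lambda\le\|\psi_0\|_{L^\infty}^2,\ |\Im\lambda|\ge R\}$ your two inequalities do not close.

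\emph{Perturbing from $\El_0$.} You cannot simply let a known bound for $\El_0$ ``persist''. Near the resonant modes $|k|\approx(|\Im\lambda|/|d|)^{1/2}$ the resolvent of $\El_0$ gains no derivative, so $\|(\lambda-\El_0)^{-1}\eps V\partial_x\|_{H^1\to H^1}$ can be as large as $\eps|\lambda|^{1/2}$. This is exactly the derivative loss the paper stresses, and a Neumann series fails there.

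\textbf{Fix 1: diagonalize $J$.} Pass to the eigencomponents $w_\pm$ of $J$ (i.e.\ $v$ and $\bar v$).
\begin{itemize}
\item The dispersion becomes $\pm\iu d\partial_x^2$, while the scalar transport $\eps V\partial_x$ and the skew part $J(\omega-2|\bpsi_\sigma|^2)$ act diagonally.
\item The non-skew part $-J(2\bpsi_\sigma\bpsi_\sigma^\top-|\bpsi_\sigma|^2I)$ anticommutes with $J$, so it only couples $w_+$ and $w_-$.
\item For $|\Im\lambda|$ large one component is non-resonant. Its imaginary-part identity gives $|\Im\lambda|\|w_{\mathrm{nr}}\|^2+|d|\|\partial_xw_{\mathrm{nr}}\|^2\ls\|w_{\mathrm{nr}}\|^2+(\|w_{\mathrm{r}}\|+\|\mathbf f\|)\|w_{\mathrm{nr}}\|$, with the transport term absorbed into the dispersion.
\item The resonant component is controlled by its real part, using only the damping $1-\kappa$ and the harmless contribution $-\frac{\eps}{2}V'$.
\item The coupling is then $O(|\Im\lambda|^{-1})$, and one differentiation upgrades the bound to $H^1$.
\end{itemize}

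\textbf{Fix 2: import the $L^2$ bound.} Do as the paper does and take the uniform $L^2$ resolvent bound from \cite[Lem.~19]{Bengel2024Pinning}. You can then still work in $H^1$, instead of the paper's $L^2$ Gearhart--Pr\"uss plus complex interpolation. For $|\lambda|\ge R$, differentiate the resolvent equation to get $(\lambda-\El_\eps(\bpsi_\sigma)-\eps V')\ub_x=\mathbf f_x+(\partial_\ub N(\bpsi_\sigma))'\ub$, where $\eps V'$ is an $O(\eps)$ bounded perturbation in $L^2$.

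The rest of your plan is sound: the energy growth bound, the Neumann series on the compact set, the maximum principle for the reduced resolvent near $0$, and Step~3, which coincides with the paper.
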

\begin{proof}
    We start with the proof of the first statement and note that the second statement can be proven by the same arguments.
    The idea is to use the stability theorem of Gearhart-Pr\"uss-Greiner~\cite[Sect.~5 Thm.~1.1]{Engel2000One}. 
    However, we need to track the $\eps$ and $\sigma$-dependencies of the constants in the estimates.

    It follows from~\cite[Lem.~19]{Bengel2024Pinning} that there exist constants $\varpi,C>0$ ($\eps,\sigma$ independent) such that $\|S_{\sigma,\eps}(t)\|_{L^2\to L^2} \leq C \eu^{\varpi t}$ and
    $$
        \|(\lambda-\El_\eps(\bpsi_\sigma))^{-1} \Pi_{\sigma,\eps}^\textup{s}\|_{L^2 \to L^2} \leq C, \qquad  \text{ for all } \lambda\in \C \text{ with }\Re(\lambda)\geq -\frac{\rho}{2}
    $$
    provided $\eps \ll 1$.
    Thus, following word by word the proof in~\cite[Sect.~5 Thm.~1.1]{Engel2000One} we find $C'>0$ independent of $\eps,\sigma$ such that $\|S_{\sigma,\eps}(t)\|_{L^2 \to L^2} \leq C' t^{-1}$. 
    Using~\cite[Sect.~2 Prop.~2.2, Lem.~2.3]{Engel2000One} we then obtain $\|S_{\sigma,\eps}(t)\|_{L^2 \to L^2} \leq C \eu^{-\kappa t}$ for $\kappa,C>0$ independent of $\sigma$ and $\eps$. 
    To upgrade the $L^2\to L^2$-estimate to an $H^1\to H^1$-estimate, we exploit complex interpolation as in~\cite[Lem.~22]{Bengel2024Pinning}.
    This yields the first estimate upon noting that the constants remain $\eps$ and $\sigma$-independent after interpolating.

    To prove the third statement, we decompose the $C^0$-semigroup and apply the estimate (ii) to obtain
    \begin{align*}
        \|S_\eps(t)\|_{H^1 \to H^1} 
        \leq \|S_\eps(t)\Pi_\eps^\textup{s}\|_{H^1 \to H^1} 
        + \|S_\eps(t)\Pi_\eps^\textup{c}\|_{H^1 \to H^1}
        \ls \eu^{-\kappa t} + \eu^{\lambda_\eps t}\| \langle \cdot,\bvphi_\eps^* \rangle_{L^2} \bvphi_\eps\|_{H^1 \to H^1} 
        \ls \eu^{-\eps \kappa t},
    \end{align*}
    upon adjusting the value of $\kappa>0$ if necessary.
    Here we note that the last estimate follows by combining the expansion from Lemma~\ref{lem:perturbation_eigenvalue} (b), $\lambda_\eps = - \lambda_1 \eps + O(\eps^2)$ with the spectral stability assumption $\lambda_1 = \mathcal{V}_\eff'(\sigma_*) >0$.
\end{proof}

\section{Nonlinear dynamics: proof of Theorem~\ref{thm:main}}\label{sec:NonlinearDynamics}
In this section, we give the proof of Theorem~\ref{thm:main}. The section is structured as follows. We start with a lemma that allows us to decompose initial data into a stable part and a central part according to the decompositions established in Section~\ref{sec:linear}. Then, in Section~\ref{sec:ModulationDynamics} we employ the renormalization group method to prove part (i) of Theorem~\ref{thm:main}. In Section~\ref{sec:AsmyptoticConvergence}, we prove the asymptotic stability result of Theorem~\ref{thm:main} by establishing refined nonlinear estimates and improving errors on an intermediate time regime.

Let us assume that $\bpsi_0 \in H^2(\T)$ satisfies Assumption~\ref{Assum1} and let $\bpsi_{\sigma_*,\eps}$ be given as in Theorem~\ref{thm:main}~(ii).
Throughout the proof, we repeatedly employ a change of coordinates in a neighborhood of the manifold $\{\bpsi_\sigma\}_{\sigma \in \R}$. 
This will allow us to decompose the solution of~\eqref{eq:LLE_sys} into a part that can be controlled through the use of linear estimates of Lemma~\ref{lem:LinearDecay}
and a part that evolves on the manifold $\{\bpsi_\sigma\}_{\sigma \in \R}$. 
The following lemma establishes this decomposition.
\begin{Lemma}\label{lem:inital_decomposition}
    There exist $\eta_0,\eps_0>0$ such that for all $\eps \in [0,\eps_0)$ and all $v \in H^1(\T)$ with $\|v\|_{H^1} \leq 1$ the following holds.
    \begin{enumerate}
        \item For all $\sigma_0\in \R$ and $\eta \in [0,\eta_0)$, there exists a unique $\sigma\in \R$ with $|\sigma- \sigma_0| \ls \eta$ and
        \begin{align*}
            \bpsi_{\sigma_0} + \eta \vb - \bpsi_{\sigma} \in X_{\sigma,\eps}^\textup{s}.
        \end{align*}
        Furthermore, if there is $\sigma_1 \in \R$ such that $\vb \in X_{\sigma_1,\eps}^\textup{s}$, then
        $
            |\sigma-\sigma_0|  \ls  \eta (|\sigma_1-\sigma_0| + \eps).
        $
        \item If $|\sigma_0-\sigma_*| \ls \eps_0$, then for all $\eta \in [0,\eta_0)$ there exists a unique $\varsigma \in \R$ with $|\varsigma- \sigma_0|\ls \eta $ and
        \begin{align*}
            \bpsi_{\sigma_0} + \eta \vb - \bpsi_{\varsigma} \in X_{\eps}^\textup{s}.
        \end{align*}
    \end{enumerate}
\end{Lemma}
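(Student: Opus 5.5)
We reformulate the condition as a scalar equation and solve it with the uniform implicit function theorem (Theorem~\ref{thm:IFT}). For part 1, fix $\sigma_0$ and $\eps$ and define
$$
    F(\sigma,\eta) := \langle \bpsi_{\sigma_0} + \eta\vb - \bpsi_\sigma, \bphi_{\sigma,\eps}^*\rangle_{L^2}.
$$
Then $\bpsi_{\sigma_0}+\eta\vb-\bpsi_\sigma\in X^\textup{s}_{\sigma,\eps}$ if and only if $F(\sigma,\eta)=0$, because $\Pi^\textup{c}_{\sigma,\eps}$ is the rank-one projection defined with $\bphi^*_{\sigma,\eps}$ in~\eqref{def:spec_proj}.

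First, $F(\sigma_0,0)=0$ holds trivially. Differentiating in $\sigma$ gives
$$
    \partial_\sigma F(\sigma_0,0)=\langle \bpsi_{\sigma_0}',\bphi^*_{\sigma_0,\eps}\rangle_{L^2}.
$$
By Lemma~\ref{lem:perturbation_eigenvalue}(a) (items 3 and 5, or simply $\langle\bpsi_\sigma',\bphi^*_\sigma\rangle=1$ together with $\|\bphi^*_\sigma-\bphi^*_{\sigma,\eps}\|_{H^1}\ls\eps$), this equals $1+O(\eps)$, uniformly in $\sigma_0$. Smoothness of $F$ in $\sigma$ with $\sigma$-uniform bounds on second derivatives follows from translation structure and item 4 of that lemma. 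The bound $|\partial_\eta F|\le\|\vb\|_{L^2}\|\bphi^*_{\sigma,\eps}\|\ls 1$ is likewise uniform. Here we must check that $\sigma\mapsto\bphi^*_{\sigma,\eps}$ is Lipschitz/$C^1$ uniformly, which follows from the uniform IFT construction in the proof of Lemma~\ref{lem:perturbation_eigenvalue}.

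The uniform IFT then gives $\eta_0$, independent of $\sigma_0$, $\eps$ and $\vb$, together with a unique solution $\sigma(\eta)$ near $\sigma_0$ satisfying $|\sigma-\sigma_0|\ls\eta$. Uniqueness in a fixed neighbourhood is what the IFT provides, and that is the meaning intended here.

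For the refined estimate, assume $\vb\in X^\textup{s}_{\sigma_1,\eps}$, i.e.\ $\langle\vb,\bphi^*_{\sigma_1,\eps}\rangle=0$. Expanding $F(\sigma,\eta)=0$ gives
$$
    (\sigma-\sigma_0)\bigl(1+O(\eps)+O(|\sigma-\sigma_0|)\bigr)=\eta\langle\vb,\bphi^*_{\sigma,\eps}\rangle.
$$
The right-hand side equals
$$
    \eta\langle\vb,\bphi^*_{\sigma,\eps}-\bphi^*_{\sigma_1,\eps}\rangle
    \ls \eta\bigl(|\sigma-\sigma_1|\bigr)\ls\eta\bigl(|\sigma_1-\sigma_0|+|\sigma-\sigma_0|\bigr).
$$
To bound $\|\bphi^*_{\sigma,\eps}-\bphi^*_{\sigma_1,\eps}\|\ls|\sigma-\sigma_1|$ we use that $\bphi^*_{\sigma,\eps}$ can be chosen as $\bphi^*_{0,\eps}$ translated only up to the $V$-dependence. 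A cleaner route is to compare each with the exactly translated $\bphi^*_\sigma$ at cost $O(\eps)$, which gives
$$
    \eta\bigl(|\sigma_1-\sigma_0|+|\sigma-\sigma_0|+\eps\bigr).
$$
Absorbing $\eta|\sigma-\sigma_0|$ into the left-hand side for $\eta_0$ small yields $|\sigma-\sigma_0|\ls\eta(|\sigma_1-\sigma_0|+\eps)$.

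Part 2 is identical, with $\bvphi^*_\eps$ in place of $\bphi^*_{\sigma,\eps}$ and
$$
    G(\varsigma,\eta)=\langle\bpsi_{\sigma_0}+\eta\vb-\bpsi_\varsigma,\bvphi_\eps^*\rangle_{L^2}.
$$
Now $\partial_\varsigma G(\sigma_0,0)=\langle\bpsi'_{\sigma_0},\bvphi^*_\eps\rangle$. Since $|\sigma_0-\sigma_*|\ls\eps_0$, Lemma~\ref{lem:perturbation_eigenvalue}(b) and item 4 of part (a) give $\bvphi^*_\eps=\bphi^*_{\sigma_*}+O(\eps)$ and hence
$$
    \partial_\varsigma G(\sigma_0,0)=1+O(\eps_0).
$$
This is nonzero for small $\eps_0$, and the IFT applies as before.

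The main obstacle is the uniformity of all constants in $\sigma_0$, $\eps$ and $\vb$, which requires the quantitative Theorem~\ref{thm:IFT} and the uniform Lipschitz dependence of the adjoint eigenfunctions on $\sigma$. This is the delicate step in the refined estimate.
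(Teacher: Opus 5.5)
Your proposal is correct and is essentially the paper's proof: you use the same scalar function, the uniform implicit function theorem with $\partial_\sigma F(\sigma_0,0)=1+O(\eps)$, and, for the refined bound, the orthogonality $\langle\vb,\bphi^*_{\sigma_1,\eps}\rangle_{L^2}=0$ together with $\|\bphi^*_{\sigma,\eps}-\bphi^*_{\sigma_1,\eps}\|_{L^2}\ls|\sigma-\sigma_1|+\eps$ and absorption for small $\eta_0$. The only difference is that you expand $F(\sigma,\eta)=0$ directly at the solution, whereas the paper bounds $\partial_\eta\sigma$ along the branch and applies the mean value theorem; the two are equivalent. You should drop your intermediate $\eps$-free Lipschitz bound on $\sigma\mapsto\bphi^*_{\sigma,\eps}$, since Lemma~\ref{lem:perturbation_eigenvalue} does not provide it and your argument does not need it.
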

\begin{proof}
    (i): Let $\eps_0>0$ be as in Lemma~\ref{lem:perturbation_eigenvalue}. 
    For $\eps \in [0, \eps_0)$ we define the map $G \colon \R^2 \to \R$, $G(\sigma,\eta) := \langle \bpsi_{\sigma_0} + \eta \vb - \bpsi_{\sigma}, \bphi_{\sigma,\eps}^* \rangle_{L^2}$. 
    Note that $\bpsi_{\sigma_0} + \eta \vb - \bpsi_{\sigma} \in X_{\sigma,\eps}^\textup{s}$ if and only if $G(\sigma,\eta) = 0$. 
    On the one hand, we have $G(\sigma_0,0) = 0$ and on the other hand it holds 
    $\partial_\sigma G(\sigma_0,0) = \langle \bpsi_{\sigma_0}',\bphi_{\sigma_0,\eps}^* \rangle_{L^2} = 1 + O(\eps) \not = 0$ 
    upon choosing $\eps_0>0$ smaller if necessary. 
    By Theorem~\ref{thm:IFT}, there exists $\eta_0>0$ depending only on $\eps_0$ and a smooth branch 
    $[0,\eta_0) \ni \eta\mapsto \sigma(\eta) \in \R$ with $\sigma(0) = \sigma_0$ such that $G(\sigma(\eta),\eta) = 0$ for all $\eta \in [0, \eta_0)$. 
    If further $\vb \in X_{\sigma_1,\eps}^\textup{s}$, we have using H\"older's inequality and Lemma~\ref{lem:perturbation_eigenvalue},
    $$
        |\partial_\eta G(\sigma,\eta)| = |\langle \vb, \bphi_{\sigma,\eps}^* \rangle_{L^2}| \leq |\langle \vb, \bphi_{\sigma_1,\eps}^*-\bphi_{\sigma,\eps}^* \rangle_{L^2}|
        \leq \|\bphi_{\sigma_1,\eps}^*-\bphi_{\sigma,\eps}^* \|_{L^2} \ls |\sigma_1-\sigma| + \eps.
    $$
    Thus, using $\partial_\eta \sigma(\eta) = - \partial_\sigma G(\sigma(\eta),\eta)^{-1} \partial_\eta G(\sigma(\eta),\eta)$ and the mean value theorem we obtain
    \begin{align*}
        |\sigma(\eta)-\sigma_0| &\leq \sup_{s \in [0,1]} |\partial_\eta \sigma(s\eta )| \eta
        \ls \sup_{s \in [0,1]} |\partial_\eta G(\sigma(s\eta ),s\eta)| \eta 
        \ls (\sup_{s \in [0,1]} |\sigma_1- \sigma(s\eta)| + \eps)  \eta \\
        &\ls \eta_0\sup_{s \in [0,1]} |\sigma(s\eta)-\sigma_0|  + \eta |\sigma_1-\sigma_0| + \eps \eta, 
    \end{align*}
    which yields $\sup_{s \in [0,1]} |\sigma(s\eta)-\sigma_0| \ls \eta (|\sigma_1-\sigma_0| + \eps)$, provided $\eta_0>0$ is sufficiently small. 
    Thus, we arrive at
    $$
        |\sigma(\eta)-\sigma_0| \leq \sup_{s \in [0,1]} |\sigma(s\eta) -\sigma_0| \ls \eta (|\sigma_1-\sigma_0|+ \eps).
    $$

    (ii) We define $G \colon \R^2 \to \R$ by $G(\varsigma,\eta) := \langle \bpsi_{\sigma_0} + \eta \vb - \bpsi_{\varsigma}, \bvphi_{\eps}^* \rangle_{L^2}$
    and find $G(\sigma_0,0) = 0$ as well as $\partial_\varsigma G(\sigma_0,0) = \langle \bpsi_{\sigma_0}',\bvphi_\eps^* \rangle_{L^2} = 1 + O(|\sigma_0-\sigma_*| + \eps) \not = 0$. 
    This yields the claim upon applying again the implicit function theorem.
\end{proof}

\subsection{Modulational dynamics: proof of Theorem~\ref{thm:main} (i)}\label{sec:ModulationDynamics}
Under the assumptions of Theorem~\ref{thm:main} (i) we construct a solution of the form $\ub(t) = \bpsi_{\sigma(t)} + \vb(t)$ for all $t \geq 0$. We prove that $\vb$ is of order $O(\eps)$ for all $t \geq 0$ provided $\sigma$ satisfies the ODE for the position~\eqref{eq:ODEsigma} and $\eps \ll 1$. This part of the proof follows the approach in~\cite{Promislow2002Renormalization} and employs the \emph{renormalization group method} (see also~\cite{Promislow2007Oscillatory,Promislow2005Thermally} for further applications of this method). The challenge in our setting is that we need to extend the arguments in~\cite{Promislow2002Renormalization} to equations with a linear perturbation that is unbounded as an operator acting in $H^1(\T)$.

Inserting the solution ansatz into~\eqref{eq:LLE_sys} we obtain
\begin{align}\label{eq:ds1}
    - \bpsi_{\sigma}' \dot\sigma + \dot\vb = \El_{\eps}(\bpsi_{\sigma}) \vb + \Non_{\bpsi_\sigma}(\vb) + \eps V \bpsi_{\sigma}',
\end{align}
where $\Non_\ub(\vb):= N(\ub+\vb) - N(\ub) - \partial_\ub N(\ub)\vb$ for $\ub,\vb \in \R^2$. 
To write the principal linear part as a time-independent operator, let us define for $\sigma_0,\sigma_1\in \R$ the $\eps$-independent, bounded operator 
$\Be(\sigma_1,\sigma_0) := \El_{\eps}(\bpsi_{\sigma_1})-\El_{\eps}(\bpsi_{\sigma_0}) \colon H^1(\T) \to H^1(\T)$.
Then, we rewrite~\eqref{eq:ds1} as
\begin{align}\label{eq:ds2}
    \dot\vb = \El_{\eps}(\bpsi_{\sigma_0}) \vb + \Be(\sigma,\sigma_0)\vb + \Non_{\bpsi_\sigma}(\vb) + \bpsi_{\sigma}' \dot\sigma + \eps V \bpsi_{\sigma}'.
\end{align}
If $\|\ub(0) - \bpsi_{\tilde\sigma}\|_{H^1} \leq C \eta < \eta_0$ for some $\tilde{\sigma} \in \R$ we find according to Lemma~\ref{lem:inital_decomposition} (i), 
$\sigma_0 \in \R$, with $\sigma_0 = \tilde{\sigma} + O(\eta)$ such that $\vb_0:= \ub(0) - \bpsi_{\sigma_0} \in X_{\sigma_0,\eps}^\textup{s}$ and $\|\vb_0\|_{H^1} \leq \eta$, 
provided $C \in (0,1)$ is small enough. 
We then equip~\eqref{eq:ds2} with the initial condition $(\vb(0),\sigma(0)) =(\vb_0,\sigma_0) \in X_{\sigma_0,\eps}^\textup{s} \times \R$. 
The value of $\eta$ will be determined later in the proof, but let us already note at this point that $\eta = O(\eps)$. 
Let us further set $\mathcal{G}(\sigma,\sigma_0,\vb) := \Be(\sigma,\sigma_0)\vb + \Non_{\bpsi_\sigma}(\vb) + \eps V \bpsi_{\sigma}'$. 
Then the mild formulation of~\eqref{eq:ds2} is given by
\begin{align}\label{eq:ds3}
\begin{split}
    \vb(t) &= \eu^{\El_{\eps}(\bpsi_{\sigma_0}) t} \vb_0 
    + \int_0^t \eu^{\El_{\eps}(\bpsi_{\sigma_0})(t-s)} \left(\mathcal{G}(\sigma(s),\sigma_0,\vb(s)) + \bpsi_{\sigma(s)}' \dot\sigma(s) \right) \de s, \\ \sigma(0) &= \sigma_0.
\end{split}
\end{align}
Projecting onto $X_{\sigma_0,\eps}^\textup{s}$ and $X_{\sigma_0,\eps}^\textup{c}$, respectively, we arrive at
\begin{align}\label{eq:ds4}
\begin{split}
    \vb(t) &= S_{\sigma_0,\eps}(t) \vb_0 
    + \int_0^t S_{\sigma_0,\eps}(t-s) \Pi_{\sigma_0,\eps}^\textup{s} \left(\mathcal{G}(\sigma(s),\sigma_0,\vb(s)) +\bpsi_{\sigma(s)}' \dot\sigma(s) \right)  \de s,\\ 
    \dot{\sigma}(t) \langle \bpsi_{\sigma(t)}' ,\bphi_{\eps,\sigma_0}^* \rangle_{L^2} &= -\langle \mathcal{G}(\sigma(t),\sigma_0,\vb(t)),\bphi_{\sigma_0,\eps}^* \rangle_{L^2}   \\
    \sigma(0) &= \sigma_0.
\end{split}
\end{align}
In order to control the solution of~\eqref{eq:ds4} we need the estimates collected in the following lemma.
\begin{Lemma}\label{lem:estimates1}
    For $\|\vb\|_{H^1},|\sigma_1-\sigma_0| \ls 1$ the following estimates hold.
    \begin{enumerate}
        \item $\|\Pi_{\sigma_0,\eps}^\textup{s}\mathcal{G}(\sigma_1,\sigma_0,\vb)\|_{H^1} \ls \eps + |\sigma_1-\sigma_0|\|\vb\|_{H^1} + \|\vb\|_{H^1}^2 $;
        \item $\|\Pi_{\sigma_0,\eps}^\textup{s}\bpsi_{\sigma_1}' \|_{H^1} \ls 1$;
        \item $|\langle \bpsi_{\sigma_1}',\bphi_{\sigma_0,\eps}^*\rangle_{L^2} - 1| \ls \eps + |\sigma_1-\sigma_0|$;
        \item $|\langle \mathcal{G}(\sigma_1,\sigma_0,\vb),\bphi_{\sigma_0,\eps}^* \rangle_{L^2} -\eps \mathcal{V}_\eff(\sigma_1)| 
        \ls \eps^2 + \eps |\sigma_1-\sigma_0| + |\sigma_1-\sigma_0|\|\vb\|_{H^1} + \|\vb\|_{H^1}^2$.
    \end{enumerate}
\end{Lemma}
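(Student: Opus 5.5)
We will prove the four estimates directly from the definitions, using Lemma~\ref{lem:perturbation_eigenvalue} for the eigenfunctions and projections and Taylor expansion for the nonlinearity. First, the projections $\Pi^\textup{s}_{\sigma_0,\eps}$ are bounded on $H^1(\T)$ uniformly in $\sigma_0$ and $\eps$, since $\|\bphi_{\sigma_0,\eps}\|_{H^1}$ and $\|\bphi^*_{\sigma_0,\eps}\|_{L^2}$ are uniformly bounded by Lemma~\ref{lem:perturbation_eigenvalue}(a)3. This gives (ii) at once, because $\|\bpsi'_{\sigma_1}\|_{H^1}=\|\bpsi_0'\|_{H^1}$. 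For (iii) we write
$$\langle \bpsi_{\sigma_1}',\bphi^*_{\sigma_0,\eps}\rangle-1=\langle \bpsi_{\sigma_1}'-\bpsi_{\sigma_0}',\bphi^*_{\sigma_0,\eps}\rangle+\langle\bpsi_{\sigma_0}',\bphi^*_{\sigma_0,\eps}-\bphi^*_{\sigma_0}\rangle ,$$
using the normalization $\langle\bpsi_{\sigma_0}',\bphi_{\sigma_0}^*\rangle=1$. Items 3 and 4 of Lemma~\ref{lem:perturbation_eigenvalue}(a) bound these two terms by $|\sigma_1-\sigma_0|$ and $\eps$, respectively.

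For (i) we split $\mathcal G=\Be(\sigma_1,\sigma_0)\vb+\Non_{\bpsi_{\sigma_1}}(\vb)+\eps V\bpsi'_{\sigma_1}$ and bound each term in $H^1$.
\begin{itemize}
\item The operator $\Be(\sigma_1,\sigma_0)=\partial_\ub N(\bpsi_{\sigma_1})-\partial_\ub N(\bpsi_{\sigma_0})$ is multiplication by a matrix that is quadratic in $\bpsi$. Since $H^1(\T)$ is an algebra and $\|\bpsi_{\sigma_1}-\bpsi_{\sigma_0}\|_{H^1}\ls|\sigma_1-\sigma_0|$ by the mean value theorem, we get $\|\Be\vb\|_{H^1}\ls|\sigma_1-\sigma_0|\|\vb\|_{H^1}$.
\item The remainder $\Non_{\bpsi}(\vb)$ is a cubic polynomial in $\vb$ with no constant or linear part. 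Its coefficients involve $\bpsi\in H^2$, so the algebra property gives $\|\Non\|_{H^1}\ls\|\vb\|_{H^1}^2+\|\vb\|_{H^1}^3\ls\|\vb\|^2_{H^1}$ for $\|\vb\|_{H^1}\ls1$.
\item The forcing term satisfies $\|\eps V\bpsi'_{\sigma_1}\|_{H^1}\ls\eps$, since $V$ is smooth and $\bpsi_0\in H^2$.
\end{itemize}
Note that $\mathcal G$ does not contain the unbounded term $\eps V\partial_x\vb$, which was absorbed into $\El_\eps(\bpsi_{\sigma_0})$. This is exactly why the estimate stays in $H^1$ without loss of derivatives.

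Estimate (iv) is the main step, because it identifies the leading term $\eps\mathcal V_\eff(\sigma_1)$. We pair each of the three pieces with $\bphi^*_{\sigma_0,\eps}$.
\begin{itemize}
\item The first two pieces contribute $O(|\sigma_1-\sigma_0|\|\vb\|_{H^1}+\|\vb\|_{H^1}^2)$ by (i) and Cauchy--Schwarz.
\item For the forcing term we decompose
$$\eps\langle V\bpsi'_{\sigma_1},\bphi^*_{\sigma_0,\eps}\rangle=\eps\langle V\bpsi'_{\sigma_1},\bphi^*_{\sigma_1}\rangle+\eps\langle V\bpsi'_{\sigma_1},\bphi^*_{\sigma_0}-\bphi^*_{\sigma_1}\rangle+\eps\langle V\bpsi'_{\sigma_1},\bphi^*_{\sigma_0,\eps}-\bphi^*_{\sigma_0}\rangle .$$
By definition~\eqref{eq:EffectivePotential}, the first term equals $\eps\mathcal V_\eff(\sigma_1)$. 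By Lemma~\ref{lem:perturbation_eigenvalue}(a)4 and (a)3, the second term is $O(\eps|\sigma_1-\sigma_0|)$ and the third is $O(\eps^2)$.
\end{itemize}

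The only delicate point is the sign convention: the real $L^2$ pairing must match the $\R^2$-inner product in~\eqref{eq:EffectivePotential}, which it does by construction. The constants must also be uniform in $\sigma_0$ and $\sigma_1$, which follows because every bound above is translation invariant.
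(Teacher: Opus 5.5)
Your proposal is correct and follows essentially the same route as the paper. You bound $\Be(\sigma_1,\sigma_0)\vb$, $\Non_{\bpsi_{\sigma_1}}(\vb)$ and $\eps V\bpsi'_{\sigma_1}$ separately in $H^1$, and you use Lemma~\ref{lem:perturbation_eigenvalue}(a) to control the eigenfunction differences in (iii) and (iv). The only differences are cosmetic: in (iii) you pivot at $\sigma_0$ instead of $\sigma_1$, and you split the error $\bphi^*_{\sigma_0,\eps}-\bphi^*_{\sigma_1}$ into two pieces where the paper bounds it in one step.
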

\begin{proof}
    We have, using the mean value theorem,
    \begin{align*}
        \|\Pi_{\sigma_0,\eps}^\textup{s} \mathcal{B}(\sigma_1,\sigma_0) \vb\|_{H^1} 
        \ls \|\mathcal{B}(\sigma_1,\sigma_0) \vb\|_{H^1}
        \ls |\sigma_1-\sigma_0| \|\vb\|_{H^1}.
    \end{align*}
    Moreover,
    \begin{align*}
        \|\Pi_{\sigma_0,\eps}^\textup{s} \Non_{\bpsi_{\sigma_1}}(\vb)\|_{H^1} 
        \ls \|\Non_{\bpsi_{\sigma_1}}(\vb)\|_{H^1}
        \ls \|\vb\|_{H^1}^2,\qquad
        \|\Pi_{\sigma_0,\eps}^\textup{s} \eps V \bpsi_{\sigma_1}'\|_{H^1} \ls \eps,
    \end{align*}
    which together prove the first estimate. The second estimate follows directly, and for the third estimate we observe that by Lemma~\ref{lem:perturbation_eigenvalue}, 
    $$
        \|\bphi_{\sigma_1,0}^*-\bphi_{\sigma_0,\eps}^*\|_{L^2} \ls \eps + |\sigma_1-\sigma_0|.
    $$
    Thus, the Cauchy-Schwarz inequality yields
    $$
        |\langle \bpsi_{\sigma_1}',\bphi_{\sigma_0,\eps}^*\rangle_{L^2} - 1| = |\langle \bpsi_{\sigma_1}',\bphi_{\sigma_0,\eps}^*-\bphi_{\sigma_1,0}^*\rangle_{L^2}| \ls \eps + |\sigma_1-\sigma_0|.
    $$
    Finally, we compute
    \begin{align*}
        |\langle \mathcal{G}(\sigma_1,\sigma_0,\vb),\bphi_{\sigma_0,\eps}^* \rangle_{L^2} -\eps \mathcal{V}_\eff(\sigma_1)| &= |\langle \mathcal{B}(\sigma_1,\sigma_0) \vb + \Non_{\bpsi_{\sigma_1}}(\vb) + \eps V \bpsi_{\sigma_1}',\bphi_{\sigma_0,\eps}^* \rangle_{L^2} - \eps \langle V \bpsi_{\sigma_1}', \bphi_{\sigma_1}^* \rangle_{L^2}|\\
        & \ls \eps |\langle V \bpsi_{\sigma_1}',\bphi_{\sigma_0,\eps}^*-\bphi_{\sigma_1}^*  \rangle_{L^2} |
        + |\sigma_1-\sigma_0| \|\vb\|_{H^1} + \|\vb\|_{H^1}^2 \\
        & \ls \eps^2 + \eps |\sigma_1-\sigma_0| + |\sigma_1-\sigma_0|\|\vb\|_{H^1} + \|\vb\|_{H^1}^2.
    \end{align*}
    This completes the proof.
\end{proof}

Local well-posedness of~\eqref{eq:ds4} in $H^1(\T) \times \R$ is established in the next lemma
and follows from a standard fixed point argument.
\begin{Lemma}\label{lem:local_existence}
    For all $(\vb_0,\sigma_0) \in H^1(\T) \times \R$ there exists a unique maximal solution 
    $$(\vb,\sigma) \in C([0,T_\textup{max}), H^1(\T)) \times C^1([0,T_\textup{max}),\R)$$ of~\eqref{eq:ds4} satisfying the alternative:
    $$
        T_\textup{max} = \infty \qquad\text{or}\qquad\lim_{t \to T_\textup{max}} \|\vb(t)\|_{H^1} + |\sigma(t)| + \frac{1}{|\langle \bpsi_{\sigma(t)}' ,\bphi_{\eps,\sigma_0}^* \rangle_{L^2}|} = \infty.
    $$
\end{Lemma}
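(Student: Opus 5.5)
The plan is a contraction-mapping argument for \eqref{eq:ds4}, viewed as a coupled system for $(\vb,\sigma)$, followed by a standard continuation argument. Note first that the $\sigma$-equation is explicit once $\vb$ is known: whenever $\langle \bpsi_{\sigma}',\bphi_{\sigma_0,\eps}^*\rangle_{L^2}\neq0$ it reads
\[
\dot\sigma = F(\sigma,\vb) := -\frac{\langle \mathcal{G}(\sigma,\sigma_0,\vb),\bphi_{\sigma_0,\eps}^*\rangle_{L^2}}{\langle \bpsi_{\sigma}',\bphi_{\sigma_0,\eps}^*\rangle_{L^2}}.
\]
Substituting this $\dot\sigma$ into the integral equation for $\vb$ gives a closed system.

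\textbf{Fixed-point setting.} Fix $T>0$ small and $R>0$. Let $\mathcal{X}$ be the closed set of pairs $(\vb,\sigma)\in C([0,T],H^1(\T))\times C([0,T],\R)$ satisfying $\|\vb(t)-\vb_0\|_{H^1}\le R$, $|\sigma(t)-\sigma_0|\le r$, and $|\langle\bpsi_{\sigma(t)}',\bphi_{\sigma_0,\eps}^*\rangle|\ge \tfrac12|\langle\bpsi_{\sigma_0}',\bphi_{\sigma_0,\eps}^*\rangle|$. The last quantity is nonzero for $\eps\ll1$ by Lemma~\ref{lem:estimates1}(3); more generally it suffices that the initial denominator is nonzero. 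Define
\[
\Phi(\vb,\sigma)=\Bigl(S_{\sigma_0,\eps}(t)\vb_0+\int_0^t S_{\sigma_0,\eps}(t-s)\Pi^{\rm s}_{\sigma_0,\eps}\bigl(\mathcal{G}(\sigma,\sigma_0,\vb)+\bpsi_\sigma' F(\sigma,\vb)\bigr)\,\de s,\ \sigma_0+\int_0^t F(\sigma,\vb)\,\de s\Bigr).
\]

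\textbf{Estimates.} The semigroup $S_{\sigma_0,\eps}$ is bounded on $H^1$ by Lemma~\ref{lem:LinearDecay}. The nonlinearity maps $H^1\times\R\to H^1$ and is locally Lipschitz, for the following reasons. $H^1(\T)$ is an algebra, so $\Non_{\bpsi_\sigma}(\vb)$ is locally Lipschitz in $H^1$. The operator $\Be(\sigma,\sigma_0)$ is a multiplication operator by $\partial_\ub N(\bpsi_\sigma)-\partial_\ub N(\bpsi_{\sigma_0})$, whose coefficients are smooth in $\sigma$ because $\psi_0\in H^4$. Also $\sigma\mapsto\bpsi_\sigma'\in H^1$ is Lipschitz by Lemma~\ref{lem:perturbation_eigenvalue}, and $F$ is Lipschitz on $\mathcal{X}$ since the denominator is bounded away from zero. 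The unbounded term $\eps V\partial_x$ acts only on $\vb$, and it sits inside the generator $\El_\eps(\bpsi_{\sigma_0})$, not inside $\mathcal{G}$; the forcing term $\eps V\bpsi_\sigma'$ lies in $H^1$. Hence no derivative is lost. For $T$ small, depending only on $R$ and on bounds for $\|\vb_0\|_{H^1}$ and $\sigma_0$, the map $\Phi$ sends $\mathcal{X}$ into itself and is a contraction. This gives local existence and uniqueness. Once $\vb$ is continuous in time, so is $F(\sigma,\vb)$, which yields $\sigma\in C^1$.

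\textbf{Maximal solution and blow-up alternative.} Uniqueness lets us glue local solutions into a maximal solution on $[0,T_{\max})$. The existence time depends only on upper bounds for $\|\vb\|_{H^1}$ and $|\sigma|$ and on a lower bound for the denominator. So if the quantity in the alternative stayed bounded along a sequence $t\to T_{\max}<\infty$, we could restart from a time close to $T_{\max}$ with a uniform existence time and extend the solution, a contradiction. One point needs care to obtain a genuine limit rather than a limsup. The derivative bounds $|\dot\sigma|\lesssim 1+\|\vb\|_{H^1}^2$ (controlled by the denominator) and the mild formula for $\vb$ show that if the quantity is bounded on a sequence, it is bounded on a small uniform interval around each such time.

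\textbf{Main obstacle.} The delicate step is the time-dependent lower bound on the denominator, which introduces a singularity into the $\sigma$-equation. The rest is routine once we observe that the mild formulation uses the full semigroup of $\El_\eps(\bpsi_{\sigma_0})$. That semigroup is a $C^0$-semigroup on $H^1$ by the results recalled before Lemma~\ref{lem:LinearDecay}, so the loss of derivatives never appears in the fixed-point argument.
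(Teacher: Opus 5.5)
Your proposal is correct and takes the same route as the paper, which only says that \eqref{eq:ds4} is handled by a standard fixed-point argument. Your details are exactly what that argument amounts to: solving for $\dot\sigma$ while the denominator stays away from zero, keeping $\eps V\partial_x$ inside the generator $\El_\eps(\bpsi_{\sigma_0})$ so that $\mathcal{G}$ is locally Lipschitz on $H^1$, and continuing with a uniform existence time.

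One small point needs fixing. With the set $\|\vb(t)-\vb_0\|_{H^1}\le R$, the radius cannot be arbitrary, because $S_{\sigma_0,\eps}(t)\vb_0\to\vb_0$ is not uniform on bounded sets. It fails altogether if $\vb_0\notin X^{\textup{s}}_{\sigma_0,\eps}$. So take $R\ge 2(1+\sup_{0\le t\le 1}\|S_{\sigma_0,\eps}(t)\|_{H^1\to H^1})\|\vb_0\|_{H^1}$, or center the set at $S_{\sigma_0,\eps}(\cdot)\vb_0$. This is what makes the existence time depend only on the size of the data, which your blow-up alternative relies on.
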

Using Lemma~\ref{lem:estimates1}, we may write the ODE for the position as
\begin{align}\label{eq:ODE_Pos2}
    \dot\sigma(t) = - \eps \mathcal{V}_\eff(\sigma(t)) + O(\eps^2+ \eps|\sigma(t)-\sigma_0| + |\sigma(t)-\sigma_0|\|\vb(t)\|_{H^1} + \|\vb(t)\|_{H^1}^2)
\end{align}
provided $\eps,|\sigma(t)-\sigma_0| \ll 1$.

\medskip

Now we are in the position to close a nonlinear stability estimate.
Let us fix $t_0\geq 0$ and assume that $\ub(t_0) = \vb(t_0) + \bpsi_{\sigma_0}$ with $\vb(t_0) \in X_{\sigma_0,\eps}^*$, $\|\vb(t_0)\|_{H^1} \leq \eta$. 
We define the template functions
$$
    \alpha_1(t) := \sup_{s \in [t_0,t]} \eu^{\kappa (s-t_0)} \|\vb(s)\|_{H^1}, \qquad
    \beta_1(t) := \sup_{s \in [t_0,t]} |\sigma(s) - \sigma_0|  
$$
for all $t \geq t_0$ for which the solution exists. Then, $\alpha_1,\beta_1$ are continuous and $\alpha_1(t_0) \leq \eta$, $\beta_1(t_0)=0$. 
As long as $\alpha_1(t),\beta_1(t)\ls 1$, we find, using~\eqref{eq:ds4} and Lemmas~\ref{lem:LinearDecay},~\ref{lem:estimates1},
\begin{align*}
    \|\vb(t)\|_{H^1} 
    &\ls \eu^{-\kappa (t-t_0)} \eta + \eu^{-\kappa (t-t_0)} \int_{t_0}^t \eu^{\kappa (s-t_0)}
    \left( \eps + \|\vb(s)\|_{H^1} |\sigma(s)-\sigma_0| + \|\vb(s)\|_{H^1}^2 \right) \de s  \\
    &\ls \eu^{-\kappa (t-t_0)} \eta + \eu^{-\kappa (t-t_0)} \int_{t_0}^t  \eu^{\kappa (s-t_0)} \eps + \alpha_1(t) \beta_1(t) + \eu^{-\kappa (s-t_0)} \alpha_1(t)^2\de s \\
    & \ls \eu^{-\kappa (t-t_0)} \eta + \eps + \eu^{-\kappa (t-t_0)} (t-t_0) \alpha_1(t) \beta_1(t) +  \eu^{-\kappa (t-t_0)}\alpha_1(t)^2 
\end{align*}
and thus, there exists $C>1$ such that
\begin{align*}
    \alpha_1(t) \leq C \left( \eta + \eps \eu^{\kappa(t-t_0)} + (t-t_0) \alpha_1(t) \beta_1(t) + \alpha_1(t)^2\right).
\end{align*}
Now we set $\eta = 6 C \eps$ and
\begin{align}\label{eq:def_t1}
\begin{split}
    t_1 := \sup\left\{t>t_0 :
    (t-t_0) \beta_1(t) \leq \frac{1}{2C}, \
    \kappa (t-t_0) \leq \log\left(\frac{2\eta}{\eps}\right) = \log\left( 12C\right), \
    \alpha_1(t) \leq 6 C \eta
    \right\}.
\end{split}
\end{align}
Then, for all $t \in [t_0,t_1]$ we find
\begin{align}\label{eq:estimate_T1}
    \alpha_1(t) \leq 2 C (2\eta + \alpha_1(t)^2) \leq 4 C \eta + 72 C^3 \eta^2 \leq 5C \eta
\end{align}
provided $\eps\ll 1$. Next, we show that for $\eps$ small, $t_1 = \kappa^{-1} \log(12C) +t_0$. 
Utilizing~\eqref{eq:ODE_Pos2} we find for $t \in [t_0,t_1]$,
\begin{align*}
    |\sigma(t) -\sigma_0| &\leq \int_{t_0}^t |\dot\sigma(s)| \de s
    \ls \int_{t_0}^t \eps + |\sigma(s) -\sigma_0| \|\vb (s)\|_{H^1} + \|\vb(s)\|_{H^1}^2 \de s \\
    &\ls \eps (t-t_0) + \alpha_1(t) \beta_1(t) + \alpha_1(t)^2,
\end{align*}
and thus there exists $C'>1$ such that $(t_1-t_0)\beta_1(t_1) \leq C' \eps$.
In particular, choosing $\eps>0$ smaller if necessary we infer $(t_1-t_0) \beta_1(t_1) \leq (C4)^{-1}$ which means, taking into account~\eqref{eq:estimate_T1},
$t_1 = \kappa^{-1} \log(12 C) + t_0$. 
Then, using again~\eqref{eq:estimate_T1} we obtain
$$
    \|\vb(t_1)\|_{H^1} \leq 6C \eta \eu^{-\kappa(t_1-t_0)} = \frac{\eta}{2} 
    \quad\text{and}\quad
    \ub(t_1) = \bpsi_{\sigma(t_1)} + \vb(t_1), \quad  \vb(t_1) \in X_{\sigma_0,\eps}^\textup{s}. 
$$
Combining the last estimate with Lemma~\ref{lem:inital_decomposition} (i), there exists $\sigma_1 \in \R$ such that
$$
    \vb_1:=\bpsi_{\sigma(t_1)} + \vb(t_1) - \bpsi_{\sigma_1} \in X_{\sigma_1,\eps}^\textup{s}, \quad
    |\sigma_1-\sigma(t_1)| \ls \eta |\sigma_0 -\sigma(t_1)| + \eta \eps
    \ls \eta^2.
$$
From this we infer, provided $\eps$ (and hence also $\eta$) is sufficiently small,
$$
    \|\vb_1\|_{H^1} \leq \|\vb(t_1)\|_{H^1} + \|\bpsi_{\sigma(t_1)} - \bpsi_{\sigma_1}\|_{H^1} 
    \leq \frac{\eta}{2} + |\sigma(t_1) -\sigma_1| 
    \leq \frac{\eta}{2} + C' \eta^2
    \leq \eta.
$$
Now we can iterate the argument with the updated initial condition $\vb(t_1) = \vb_1$, $\sigma(t_1) = \sigma_1$ at time $t_1 = t_0 + \kappa^{-1} \log(12C)$. 
Setting $t_0=0$ and $t_n:= n\kappa^{-1} \log(12C)$ for $n \in \N$ we have proved the following. 
There exists $\sigma \in C^1([0,\infty) \setminus \cup_{n \in \N} \{t_n\} )$ with discontinuities of size $O(\eta^2)$ and
\begin{align}\label{eq:estimate_on_u}
    \|\ub(t) - \bpsi_{\sigma(t)}\|_{H^1} \leq 6 C \eta \quad \text{for all }t\geq 0.
\end{align}
Moreover, from~\eqref{eq:ODE_Pos2} we find that $\sigma$ satisfies
\begin{align}\label{eq:ODE_Pos3}
    |\dot\sigma + \eps \mathcal{V}_\eff(\sigma)| \leq C \eps^2 \qquad
    \text{on }[0,\infty) \setminus  \bigcup_{n \in \N} \{t_n\},
\end{align}
upon choosing $C$ larger if necessary. Finally, following verbatim the arguments of~\cite[Sect.~2.2.2]{Promislow2002Renormalization} we can replace the piecewise smooth $\sigma$ by $\tilde{\sigma} \in C^1([0,\infty))$ such that 
$\|\sigma-\tilde\sigma\|_{L^\infty(0,\infty)} \ls \eta^2$ and~\eqref{eq:estimate_on_u} as well as~\eqref{eq:ODE_Pos3} continue to hold
for $\tilde\sigma$ (with $C$ replaced by $2C$). This proves the first part of Theorem~\ref{thm:main}.

\subsection{Asymptotic convergence: proof of Theorem~\ref{thm:main} (ii)}\label{sec:AsmyptoticConvergence}
Throughout this section, let the assumptions of Theorem~\ref{thm:main} (i) and (ii) be satisfied.
The first lemma establishes the initial transit of the solution to an $\eps$-neighborhood of the asymptotic solution realized by following the flow of the ODE~\eqref{eq:ODEsigma} for $\sigma$.

\begin{Lemma}\label{lem:InitialTransit}
    There exists $0 \leq t_1 \ls \eps^{-2}$ such that
    \begin{align*}
        |\sigma(t) - \sigma_*| \ls \eps \quad\text{and}\quad \|\ub(t) - \bpsi_{\sigma_*,\eps}\|_{H^1} \ls \eps \quad\text{for all }t \geq t_1.
    \end{align*}
\end{Lemma}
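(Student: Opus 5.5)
The plan is a scalar ODE comparison argument for $\sigma$, based on the perturbed equation~\eqref{eq:ODEsigma} from Theorem~\ref{thm:main}~(i), followed by an $H^1$ triangle inequality. Write $g(t):=\dot\sigma(t)+\eps\mathcal{V}_\eff(\sigma(t))$, so that $|g|\le C_1\eps^2$. Let $[a,b]$ denote the connected component of $\mathcal{V}_\eff^{-1}([-2C_1\eps,2C_1\eps])\cap\mathcal{I}_{\sigma_*}$ containing $\sigma_*$. Since $\sigma_*$ is a simple zero with $\mathcal{V}_\eff'(\sigma_*)>0$, the inverse function theorem gives $b-a\ls\eps$. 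By the definition of $\mathcal{I}_{\sigma_*}$, this is the only part of $\mathcal{V}_\eff^{-1}([-2C_1\eps,2C_1\eps])$ inside $\mathcal{I}_{\sigma_*}$. Using that $\mathcal{V}_\eff$ is continuous with no other zeros on $\mathcal{I}_{\sigma_*}$ and changes sign at $\sigma_*$, we get $\mathcal{V}_\eff>2C_1\eps$ on $\mathcal{I}_{\sigma_*}\cap(b,\infty)$ and $\mathcal{V}_\eff<-2C_1\eps$ on $\mathcal{I}_{\sigma_*}\cap(-\infty,a)$.

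\textbf{Step 1 (invariance).} On the part of $\mathcal{I}_{\sigma_*}$ to the right of $b$ we have $\dot\sigma\le-\eps\mathcal{V}_\eff+C_1\eps^2<-C_1\eps^2<0$. To the left of $a$ we have $\dot\sigma>C_1\eps^2>0$. Hence $\sigma(t)$ cannot leave $\mathcal{I}_{\sigma_*}$, and once $\sigma(t)\in[a,b]$ it stays there. At $t=b$ the velocity is strictly negative, so any exit through $b$ is impossible; the same holds at $a$. A first-exit-time argument makes this rigorous, using that $\sigma\in C^1$.

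\textbf{Step 2 (transit time).} On $\mathcal{I}_{\sigma_*}\setminus[a,b]$ the speed satisfies $|\dot\sigma|\ge C_1\eps^2$, and $\sigma$ moves monotonically toward $[a,b]$. Because $\sigma(0)\in\mathcal{I}_{\sigma_*}$ and the distance to be travelled is at most $2\pi$ (lifting to $\R$), the entrance time satisfies $t_1\le 2\pi/(C_1\eps^2)\ls\eps^{-2}$. This uniform-in-$\eps$ lower bound on the speed is where the definition of $\mathcal{I}_{\sigma_*}$ is essential: it rules out near-zeros of $\mathcal{V}_\eff$ of size $O(\eps)$ that could trap $\sigma$ away from $\sigma_*$.

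\textbf{Step 3 (conclusion).} For $t\ge t_1$ we have $|\sigma(t)-\sigma_*|\le b-a\ls\eps$. Then
\[
\|\ub(t)-\bpsi_{\sigma_*,\eps}\|_{H^1}\le\|\vb(t)\|_{H^1}+\|\bpsi_{\sigma(t)}-\bpsi_{\sigma_*}\|_{H^1}+\|\bpsi_{\sigma_*}-\bpsi_{\sigma_*,\eps}\|_{H^1}.
\]
The three terms are bounded, respectively, by $C_1\eps$ from Theorem~\ref{thm:main}~(i), by $\ls|\sigma(t)-\sigma_*|\ls\eps$ via the mean value theorem (since $\psi_0\in H^2$), and by $\ls\eps$ from Corollary~\ref{cor:ExistenceStationary}.

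The main obstacle is Step 1/2 at the level of rigor. We must check that $\mathcal{V}_\eff$ is sign-definite with $|\mathcal{V}_\eff|>2C_1\eps$ on $\mathcal{I}_{\sigma_*}$ outside $[a,b]$, uniformly for all $\eps<\eps_0$, and that the component $[a,b]$ has width $O(\eps)$. I would handle this directly from the connectedness condition in the definition of $\mathcal{I}_{\sigma_*}$, together with the local behaviour $\mathcal{V}_\eff(\sigma)\approx\mathcal{V}_\eff'(\sigma_*)(\sigma-\sigma_*)$ near $\sigma_*$.
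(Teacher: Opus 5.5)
Your proposal is correct and follows essentially the paper's argument. Both proofs use the same four steps: forward invariance of $\mathcal{I}_{\sigma_*}$; a local Taylor (inverse-function) bound showing that $|\mathcal{V}_\eff(\sigma)|\le 2C_1\eps$ on $\mathcal{I}_{\sigma_*}$ forces $|\sigma-\sigma_*|\lesssim\eps$; the resulting speed bound $|\dot\sigma|\ge C_1\eps^2$ outside that window, which gives $t_1\lesssim\eps^{-2}$; and the three-term triangle inequality. Your version merely spells out the sign-definiteness and the entrance-time computation, which the paper states briefly.
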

\begin{proof}
    We start with the proof of the $\sigma$-estimate. First note that $\mathcal{I}_{\sigma_*}$ is a positive invariant set for the ODE~\eqref{eq:ODEsigma} and hence $\sigma(t) \in \mathcal{I}_{\sigma_*}$ for all $t \geq 0$. Now, let $\sigma \in \mathcal{I}_{\sigma_*}$ with $|\mathcal{V}_\eff(\sigma)| \leq 2 C_1\eps$. By Taylor's theorem, there exists $C>0$ for which $|\mathcal{V}_\eff(\sigma) - (\sigma-\sigma_*) \mathcal{V}_\eff'(\sigma_*)| \leq C (\sigma-\sigma_*)^2$. If $|\sigma-\sigma_*| = 4C_1\eps/\mathcal{V}_\eff'(\sigma_*)$ we obtain $|\mathcal{V}_\eff(\sigma)|\geq 3C_1\eps$ provided $\eps$ is sufficiently small. Hence, we have established the inequality $|\sigma-\sigma_*| \leq 4 C_1 \eps/\mathcal{V}_\eff'(\sigma_*)$. In contrast, for all $\sigma \in \mathcal{I}_{\sigma_*}$ with $|\sigma-\sigma_*|> 4 C_1 \eps/\mathcal{V}_\eff'(\sigma_*)$ we find $|\mathcal{V}_\eff(\sigma)|> 2 C_1 \eps$. Combining this with the estimate $|\dot\sigma(t) + \eps \mathcal{V}_\eff(\sigma(t))| \leq C_1 \eps^2$ we find that there exists $t_1 \geq 0$ such that for all $t \geq t_1$ it holds $|\sigma(t) - \sigma_*| \leq 4 C_1\eps /\mathcal{V}_\eff'(\sigma_*)$. Moreover, integrating the ODE~\eqref{eq:ODEsigma} we obtain the upper bound $t_1 \ls \eps^{-2}$. 
    To establish the second estimate, we use the first estimate to find
    $$
        \|\ub(t) - \bpsi_{\sigma_*,\eps}\|_{H^1} 
        \leq \|\ub(t) - \bpsi_{\sigma(t)}\|_{H^1} + \|\bpsi_{\sigma(t)} - \bpsi_{\sigma_*}\|_{H^1} 
        + \|\bpsi_{\sigma_*} - \bpsi_{\sigma_*,\eps}\|_{H^1} \ls \eps
    $$
    for all $t \geq t_1$.
\end{proof}

\begin{Remark}\label{rem:FailedEstimate}
    It is tempting to use the estimate from Lemma~\ref{lem:InitialTransit} as a starting point for a bootstrap argument to establish the asymptotic stability. 
    However, this estimate is in general not strong enough to close such an argument. The reason is that the semigroup $S_\eps(t)$ generated by the linearization about $\bpsi_{\sigma_*,\eps}$ decays only at a slow rate $\eu^{-\eps\kappa t}$, cf.~Lemma~\ref{lem:LinearDecay}. 
    As a consequence, if we define $\tilde{\wb}(t) = \ub(t) - \bpsi_{\sigma_*,\eps}$
    and insert $\ub = \tilde{\wb} + \bpsi_{\sigma_*,\eps}$ into~\eqref{eq:LLE_sys} we obtain
    \begin{align*}
        \tilde{\wb} (t) = S_\eps(t - t_1) \tilde{\wb}(t_1) + \int_{t_1}^t S_\eps(t - s) \tilde{\Non}(\tilde{\wb}(s)) \de s, 
    \end{align*}
    where $\|\tilde{\Non}(\wb)\|_{H^1} \ls \|\wb\|_{H^1}^2$ and $\|\tilde{\wb}(t_1)\|_{H^1} \ls \eps$.
    For the template function $$\tilde{\alpha}(t) = \sup_{s \in [t_1,t]} \|\tilde{\wb}(s)\| \eu^{\eps \kappa(s-t_1)}$$ a direct computation then yields the estimate
    \begin{align*}
        \|\tilde{\wb}(t)\|_{H^1} &\ls \eps \eu^{-\eps \kappa(t-t_1)} + \int_{t_1}^t \eu^{-\eps\kappa(t-s)} \|\tilde{\wb}(s)\|_{H^1}^2 \de s 
        \ls \left(\eps + \eps^{-1} \tilde{\alpha}(t)^2 \right) \eu^{-\eps \kappa(t-t_1)}.
    \end{align*}
    From this we deduce $\tilde{\alpha}(t) \leq C( \eps  + \eps^{-1} \tilde{\alpha}(t)^2)$, which is not sufficient to close an estimate on $\tilde{\alpha}$, due to the presence of the $\eps^{-1}$ factor.
\end{Remark}

To resolve the problem outlined in Remark~\ref{rem:FailedEstimate}, we identify an intermediate time regime of size $-\eps^{-1} \log(\eps)$ on which we improve the error to $\|\ub(t)-\bpsi_{\sigma_*,\eps}\|_{H^1} \ls \eps^{3/2}$. In this argument, we establish improved estimates on the nonlinearity relying on parity arguments.
This is precisely the reason why we require the additional evenness assumption on $\bpsi_0$ for Theorem~\ref{thm:main} (ii). The new error estimate is proved in Section~\ref{sec:IntermediatRegime}.
Once this is achieved, we can apply standard bootstrap arguments in Section~\ref{sec:AsymptoticRegime} to prove nonlinear asymptotic convergence to the stationary solution $\bpsi_{\sigma_*,\eps}$.

\subsubsection{Intermediate time regime}\label{sec:IntermediatRegime}
Let $t_1\geq 0$ be given as in Lemma~\ref{lem:InitialTransit}.
By applying Lemma~\ref{lem:inital_decomposition} we find $\sigma_j \in \R$ with $|\sigma_j-\sigma_*|\ls \eps$, $j=1,2$ such that
$$
    \tilde\vb_1:=\ub(t_1) - \bpsi_{\sigma_1} \in  X_{\eps}^\textup{s}, \qquad 
    \wb_\eps:=\bpsi_{\sigma_*,\eps} - \bpsi_{\sigma_2} \in  X_\eps^\textup{s}.
$$
This decomposition suggests to write the solution as $\ub(t) = \vb(t) + \bpsi_{\varsigma(t)} -\bpsi_{\sigma_2} + \bpsi_{\sigma_*,\eps}$ 
with $\vb(t)= \ub(t) -\bpsi_{\varsigma(t)} -\wb_\eps$ for $t\geq t_1$. 
From this, we derive, similarly to Section~\ref{sec:ModulationDynamics}, the mild formulation
\begin{align}\label{eq:ds5}
\begin{split}
    \vb(t) &= S_\eps(t-t_1)\Pi_\eps^\textup{s} \vb_1 + \int_{t_1}^t S_\eps(t-s)\Pi_\eps^\textup{s} \Big(
    {\mathcal{H}}(\varsigma(s), \sigma_2, \vb(s)) + \dot{\varsigma}(s) \bpsi_{\varsigma(s)}' \Big) \de s \\
    \dot{\varsigma}(t) \langle \bpsi_{\varsigma(t)}', \bvphi_\eps^* \rangle_{L^2} &=  -\langle 
    \mathcal{H}(\varsigma(t) ,\sigma_2, \vb(t)), \bvphi_\eps^* \rangle_{L^2} , \\
    \varsigma(t_1) &= \sigma_1,
\end{split}
\end{align}
where we abbreviate 
$\mathcal{H}(\varsigma ,\sigma_2, \vb) = \El_\eps(\psi_{\sigma_*,\eps}) (\bpsi_{\varsigma} - \bpsi_{\sigma_2}) 
+ \Non_{\bpsi_{\sigma_*,\eps}}(\vb + \bpsi_{\varsigma} - \bpsi_{\sigma_2})$ 
and $\vb_1 = \vb(t_1)$. By the same reasoning as in Lemma~\ref{lem:local_existence} there exists a unique local solution to~\eqref{eq:ds5}. In the next lemma we derive estimates that we need to obtain bounds on $\vb$ and $\varsigma-\sigma_2$.
\begin{Lemma}\label{lem:estimates2}
    For $\|\vb\|_{H^1},|\varsigma-\sigma_2|\ls 1$ the following estimates hold.
    \begin{enumerate}
        \item $\|\vb_1\|_{H^1} \ls \eps$;
        \item $\|\Pi_\eps^\textup{s} \mathcal{H}(\varsigma ,\sigma_2, \vb)\|_{H^1} \ls \eps  |\varsigma - \sigma_2| + |\varsigma - \sigma_2|^2 + \|\vb\|_{H^1}^2$;
        \item $\|\Pi_\eps^\textup{s} \bpsi_{\varsigma}'\|_{H^1} \ls |\varsigma - \sigma_2| + \eps$;
        \item
        $
            \begin{array}{ll}
                |\langle \mathcal{H}(\varsigma ,\sigma_2, \vb), \bvphi_\eps^* \rangle_{L^2} - \lambda_1\eps (\varsigma-\sigma_2)|\\
                \qquad\qquad\qquad\qquad\ls \eps^2 |\varsigma - \sigma_2| + \eps |\varsigma - \sigma_2|^2 
                + |\varsigma-\sigma_2| \|\vb\|_{H^1}+ \|\vb\|_{H^1}^2 + |\varsigma-\sigma_2|^3;
            \end{array}
        $
        \item $|\langle \bpsi_{\varsigma}', \bvphi_\eps^* \rangle_{L^2}- 1| \ls |\varsigma - \sigma_2| + \eps$.
    \end{enumerate}
\end{Lemma}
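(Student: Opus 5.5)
The plan is to Taylor expand $\mathcal{H}$ in the small quantities $\delta:=\varsigma-\sigma_2$ and $\vb$. Throughout I use $|\sigma_2-\sigma_*|\ls\eps$, $\|\bpsi_{\sigma_*,\eps}-\bpsi_{\sigma_*}\|_{H^2}\ls\eps$ (Corollary~\ref{cor:ExistenceStationary}) and the eigenfunction bounds of Lemma~\ref{lem:perturbation_eigenvalue}~(b).

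\textbf{Items 1, 3 and 5.} These are direct consequences of closeness.
\begin{itemize}
\item For item 1, $\vb_1=\ub(t_1)-\bpsi_{\sigma_1}-\wb_\eps$. Here $\|\ub(t_1)-\bpsi_{\sigma_1}\|_{H^1}\ls\eps$ by Lemma~\ref{lem:InitialTransit} and $|\sigma_1-\sigma_*|\ls\eps$. Also $\|\wb_\eps\|_{H^1}\le\|\bpsi_{\sigma_*,\eps}-\bpsi_{\sigma_*}\|+\|\bpsi_{\sigma_*}-\bpsi_{\sigma_2}\|\ls\eps$.
\item For item 3, write $\bpsi_\varsigma'=\bvphi_\eps+(\bpsi'_\varsigma-\bpsi'_{\sigma_*})+(\bpsi'_{\sigma_*}-\bvphi_\eps)$. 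Since $\Pi^{\textup{s}}_\eps\bvphi_\eps=0$ and $\Pi^{\textup{s}}_\eps$ is bounded on $H^1$ uniformly in $\eps$, we get $\|\Pi^{\textup{s}}_\eps\bpsi_\varsigma'\|_{H^1}\ls|\varsigma-\sigma_*|+\eps\ls|\delta|+\eps$.
\item Item 5 is the Cauchy--Schwarz argument of Lemma~\ref{lem:estimates1}, now comparing $\bvphi_\eps^*$ with $\bphi^*_{\varsigma}$ via $\|\bphi^*_{\sigma_*}-\bvphi_\eps^*\|\ls\eps$ and Lemma~\ref{lem:perturbation_eigenvalue}~(a)4.
\end{itemize}

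\textbf{Expansion of $\mathcal{H}$.} Let $\El:=\El_\eps(\bpsi_{\sigma_*,\eps})$. I expand
$\bpsi_\varsigma-\bpsi_{\sigma_2}=-\delta\bpsi_{\sigma_2}'+\tfrac{\delta^2}{2}\bpsi_{\sigma_2}''+O(\delta^3)$ in $H^3$, using that $\psi_0$ is $H^4$. The nonlinearity splits as
$$\Non_{\bpsi_{\sigma_*,\eps}}(\vb+\Delta)=\tfrac12\partial^2_\ub N(\bpsi_{\sigma_*,\eps})[\Delta,\Delta]+O(|\delta|\|\vb\|_{H^1}+\|\vb\|^2_{H^1}+|\delta|^3),$$
with $\Delta=\bpsi_\varsigma-\bpsi_{\sigma_2}$. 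This gives
$$\mathcal{H}=-\delta\,\El\bpsi_{\sigma_2}'+\tfrac{\delta^2}{2}\Big(\El\bpsi_{\sigma_2}''+\partial^2_\ub N(\bpsi_{\sigma_*,\eps})[\bpsi_{\sigma_2}',\bpsi_{\sigma_2}']\Big)+R,$$
where $\|R\|_{H^1}\ls|\delta|\|\vb\|_{H^1}+\|\vb\|_{H^1}^2+|\delta|^3$.

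\textbf{Linear term.} Differentiating the stationary equation for $\eps=0$ gives $\El_0(\bpsi_{\sigma_2})\bpsi'_{\sigma_2}=0$. Since $\El-\El_0(\bpsi_{\sigma_2})=\eps V\partial_x+O(\eps)$ as an operator from $H^2$ to $H^1$, we get $\|\El\bpsi_{\sigma_2}'\|_{H^1}\ls\eps$, which yields the $\eps|\delta|$ term in item 2. For item 4, I use the adjoint identity
$$\langle\El\bpsi_{\sigma_2}',\bvphi^*_\eps\rangle=\lambda_\eps\langle\bpsi_{\sigma_2}',\bvphi_\eps^*\rangle=(-\lambda_1\eps+O(\eps^2))(1+O(\eps)).$$
Multiplying by $-\delta$ produces exactly $\lambda_1\eps\delta+O(\eps^2|\delta|)$.

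\textbf{Quadratic term.} In item 2 it is simply bounded by $\delta^2$. In item 4 its pairing with $\bvphi^*_\eps$ must be shown to be $O(\eps\delta^2)$, and this is the main obstacle. I would try two routes.
\begin{itemize}
\item First, differentiate the stationary equation twice: $\El_0(\bpsi_{\sigma_2})\bpsi_{\sigma_2}''+\partial_\ub^2N(\bpsi_{\sigma_2})[\bpsi'_{\sigma_2}]^2=0$. Replacing $\El_0(\bpsi_{\sigma_2})$ and $\bpsi_{\sigma_2}$ by $\El$ and $\bpsi_{\sigma_*,\eps}$ costs only $O(\eps)$ in $L^2$, so the bracket is $O(\eps)$ already.
\item Second, as a safeguard matching Remark~4, use parity. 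For even $\psi_0$, reflection $x\mapsto -x$ commutes with $\El_0(\bpsi_0)$, so the one-dimensional kernel of the adjoint is reflection-invariant. The normalization $\langle\bpsi_0',\bphi_0^*\rangle=1$ with $\bpsi_0'$ odd then forces $\bphi_0^*$ to be odd. Since $\partial^2_\ub N(\bpsi_0)[\bpsi_0']^2$ is even, it pairs to zero with $\bphi_0^*$. Likewise $\langle\El\bpsi''_{\sigma_2},\bvphi^*_\eps\rangle=\lambda_\eps\langle\bpsi''_{\sigma_2},\bvphi_\eps^*\rangle=O(\eps)$.
\end{itemize}
Shifting by $\sigma_2$ and replacing $\bvphi_\eps^*$ by $\bphi^*_{\sigma_2}$ costs $O(\eps)$, giving the $\eps|\delta|^2$ term. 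The remainder $R$ produces the $|\delta|\|\vb\|+\|\vb\|^2+|\delta|^3$ terms, which completes items 2 and 4.
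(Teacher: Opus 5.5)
Your proof is correct, but in the decisive step of item~4 it takes a genuinely different and stronger route than the paper.

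The paper kills the quadratic coefficient $\langle \partial_\ub^2 N(\bpsi_{\sigma_*})[\bpsi_{\sigma_*}']^2,\bphi_{\sigma_*}^*\rangle_{L^2}$ by parity: an even integrand paired against the odd $\bphi_0^*$, with the oddness quoted from the earlier pinning paper. That is your Route~2, for which you also give a self-contained proof that $\bphi_0^*$ is odd.

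Your Route~1 instead differentiates the profile equation twice, $\El_0(\bpsi_0)\bpsi_0''+\partial_\ub^2N(\bpsi_0)[\bpsi_0']^2=0$. So the quadratic term lies in $\Ran\El_0(\bpsi_0)$ and is annihilated by $\bphi_0^*\in\ker\El_0(\bpsi_0)^*$. This is the second-order trace of translation invariance at $\eps=0$. Subtracting the profile equations for $\bpsi_\varsigma$ and $\bpsi_{\sigma_2}$ even gives $\|\mathcal{H}(\varsigma,\sigma_2,0)\|_{H^1}\ls\eps|\varsigma-\sigma_2|$. The identity holds for every profile satisfying Assumption~\ref{Assum1}, so Route~2 is superfluous. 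The paper states that this lemma is the only place where evenness of $\psi_0$ is used. Your argument therefore shows that the parity hypothesis in Theorem~\ref{thm:main}~(ii) can be dropped; the paper's symmetry check buys only brevity.

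In item~2 you also differ mildly. You bound $\mathcal{H}$ itself in $H^1$ via $\El_0(\bpsi_{\sigma_2})\bpsi_{\sigma_2}'=0$, i.e.\ $\|\El_\eps(\bpsi_{\sigma_*,\eps})\bpsi_{\sigma_2}'\|_{H^1}\ls\eps$. The paper instead commutes $\Pi_\eps^{\textup{s}}$ with $\El_\eps(\bpsi_{\sigma_*,\eps})$ and estimates $\Pi_\eps^{\textup{s}}(\bpsi_\varsigma-\bpsi_{\sigma_2})$ in $H^3$. Both work. You still need Young's inequality to absorb $|\varsigma-\sigma_2|\,\|\vb\|_{H^1}$ into the stated right-hand side.

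One small slip: a cubic Taylor remainder in $H^3$, and $\El_\eps(\bpsi_{\sigma_*,\eps})\bpsi_{\sigma_2}''\in H^1$, require more than $H^4$ regularity of $\psi_0$. This is harmless, since $\psi_0$ is $C^\infty$ by bootstrapping the profile equation.
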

\begin{proof}
    To prove the first estimate, we observe with $\vb_1 = \tilde{\vb}_1 - \wb_\eps$ that
    \begin{align*}
        \|\vb_1\|_{H^1}
        &\leq \|\tilde\vb_1\|_{H^1} + \|\wb_\eps\|_{H^1} 
        \leq \|\ub(t_1)-\bpsi_{\sigma(t_1)}\|_{H^1} + \|\bpsi_{\sigma(t_1)} - \bpsi_{\sigma_1}\|_{H^1} + \eps
        \ls \eps.
    \end{align*}

    For the second estimate, we first compute 
    $\Pi_\eps^\textup{s} (\bpsi_{\varsigma} - \bpsi_{\sigma_2}) = (\bpsi_{\varsigma} - \bpsi_{\sigma_2}) 
    - \langle\bpsi_{\varsigma} - \bpsi_{\sigma_2},\bvphi_\eps^*\rangle_{L^2}  \bvphi_\eps$ and the identities
    \begin{align*}
        \bpsi_{\varsigma} - \bpsi_{\sigma_2} &= 
        (\sigma_2-\varsigma) \bpsi_{\xi \varsigma + (1-\xi) \sigma_2}'
        = (\sigma_2-\varsigma) \bvphi_\eps + O (|\sigma_2-\varsigma|( |\varsigma-\sigma_*| + |\sigma_2-\sigma_*| + \eps) )    ,\\
        \langle\bpsi_{\varsigma} - \bpsi_{\sigma_2},\bvphi_\eps^*\rangle_{L^2}
        &= \langle \bpsi_{\xi \varsigma + (1-\xi) \sigma_2}' , \bvphi_\eps^* \rangle_{L^2} (\sigma_2 - \varsigma) 
        = (1 + O( |\varsigma-\sigma_*| + |\sigma_2-\sigma_*|+ \eps) )  (\sigma_2-\varsigma),
    \end{align*}
    for some $\xi \in (0,1)$.
    Therefore, we find 
    $\Pi_\eps^\textup{s} (\bpsi_{\varsigma} - \bpsi_{\sigma_2}) 
    = O(|\varsigma - \sigma_2| (|\varsigma-\sigma_*| + |\sigma_2-\sigma_*|+\eps))$ in $H^3(\T)$, where we implicitly used $\|\bpsi_0\|_{H^4} \ls 1$ and $\|\bvphi_\eps - \bpsi_{\sigma_*}'\|_{H^3} \ls \eps$. We then find
    \begin{align*}
        \|\Pi_\eps^\textup{s}\El_\eps(\bpsi_{\sigma_*,\eps})(\bpsi_{\varsigma} - \bpsi_{\sigma_2})\|_{H^1} \ls
        \|\Pi_\eps^\textup{s}(\bpsi_{\varsigma} - \bpsi_{\sigma_2}) \|_{H^3} 
        \ls  |\varsigma - \sigma_2| (|\varsigma-\sigma_*| + |\sigma_2-\sigma_*| + \eps).
    \end{align*}
    Furthermore, we have 
    \begin{align*}
        \|\Pi_\eps^\textup{s} \Non_{\bpsi_{\sigma_*,\eps}}(\vb + \bpsi_{\varsigma} - \bpsi_{\sigma_2})\|_{H^1}
        \ls \|\Non_{\bpsi_{\sigma_*,\eps}}(\vb + \bpsi_{\varsigma} - \bpsi_{\sigma_2})\|_{H^1} 
        \ls |\varsigma - \sigma_2|^2 + \|\vb\|_{H^1}^2,
    \end{align*}
    which proves the second inequality, upon noting $|\sigma_2-\sigma_*|\ls \eps$. 

    For the third estimate, we compute
    \begin{align}\label{eq:proofestimates2}
        \|\bpsi_{\varsigma}' - \bvphi_\eps\|_{H^1} = 
        \|\bpsi_{\varsigma}' -\bpsi_{\sigma_*}' + \bpsi_{\sigma_*}' -  \bvphi_\eps \|_{H^1}
        = O(|\varsigma - \sigma_2| + \eps)
    \end{align}
    which proves the assertion since $\Pi_\eps^\textup{s}\bpsi_{\varsigma}' = \Pi_\eps^\textup{s}(\bpsi_{\varsigma}'-\bvphi_\eps)$.

    For the fourth estimate we first observe
    \begin{align*}
        \langle \El_\eps(\bpsi_{\sigma_*,\eps}) (\bpsi_{\varsigma} - \bpsi_{\sigma_2}), \bvphi_\eps^*\rangle_{L^2} &=
        \langle \bpsi_{\varsigma} - \bpsi_{\sigma_2}, \El_\eps(\bpsi_{\sigma_*,\eps})^*\bvphi_\eps^*\rangle_{L^2}
        = \lambda_\eps (\sigma_2-\varsigma) \langle\bpsi_{\xi \varsigma + (1-\xi) \sigma_2}'  ,\bvphi_\eps^*\rangle_{L^2}\\
        &= -\lambda_1\eps (\sigma_2-\varsigma) (1 + O(|\varsigma -  \sigma_2| + \eps).
    \end{align*}
    In order to obtain the improved estimate on the nonlinear part, we expand the nonlinearity up to second order. 
    To this end, we introduce the shorthand notation $\Delta\bpsi = \bpsi_{\varsigma} - \bpsi_{\sigma_2}$, $\Delta\sigma =\varsigma -\sigma_2$. 
    Taylor's formula then yields
    \begin{align*}
        \Non_{\bpsi_{\sigma_*,\eps}}(\vb + \Delta\bpsi) = \frac{1}{2}\partial_\ub^2N(\bpsi_{\sigma_*,\eps})[\vb+\Delta\bpsi]^2 + O(\|\vb + \Delta\bpsi\|_{H^1}^3).
    \end{align*}
    We expand $\partial_\ub^2N(\bpsi_{\sigma_*,\eps}) = \partial_\ub^2N(\bpsi_{\sigma_*}) + O(\eps)$ and for some $\xi \in (0,1)$,
    $$
        [\vb + \Delta\bpsi]^2 = [\vb - \Delta\sigma \bpsi_{\sigma_2 + \xi \Delta\sigma}']^2
        = [\vb]^2 - \Delta\sigma ([\vb, \bpsi_{\sigma_2 + \xi \Delta\sigma}'] + [ \bpsi_{\sigma_2 + \xi \Delta\sigma}',\vb] )
        + (\Delta\sigma)^2 [\bpsi_{\sigma_2 + \xi \Delta\sigma}']^2.
    $$
    The last term can be written as $[\bpsi_{\sigma_2 + \xi \Delta\sigma}']^2 = [\bpsi_{\sigma_*}']^2 + O(\eps + |\Delta\sigma|)$.
    Combining the last four equalities yields
    \begin{align*}
        &\langle \Non_{\bpsi_{\sigma_*,\eps}}(\vb + \bpsi_{\varsigma} - \bpsi_{\sigma_2}), \bvphi_\eps^*\rangle_{L^2}\\
        &\qquad= \frac{1}{2} (\Delta\sigma)^2\langle \partial_\ub^2 N(\bpsi_{\sigma_*}) [\bpsi_{\sigma_*}']^2, \bphi_{\sigma_*}^* \rangle_{L^2} 
        + O(\eps |\Delta\sigma|^2 +|\Delta\sigma| \|\vb\|_{H^1} + \|\vb\|_{H^1}^2 + |\Delta\sigma|^3).
    \end{align*}
    Finally, we observe that 
    $\langle \partial_\ub^2 N(\bpsi_{\sigma_*}) [\bpsi_{\sigma_*}']^2, \bphi_{\sigma_*}^* \rangle_{L^2} 
    = \langle \partial_\ub^2 N(\bpsi_{0}) [\bpsi_{0}']^2, \bphi_{0}^* \rangle_{L^2} = 0$
    and the last equality holds since $\partial_\ub^2 N(\bpsi_{0})[\bpsi_0']^2$ is an even function and $\bphi_0^*$ is odd, see~\cite[Lem.~7]{Bengel2024Pinning}.
    This yields the desired estimate.

    The final estimate follows directly from identity~\eqref{eq:proofestimates2}.
\end{proof}

Now we define the template functions
$$
    \alpha_2(t) = \sup_{s \in [t_1,t]} \|\vb(s)\|_{H^1} \eu^{\eps \kappa (s-t_1)}, \qquad
    \beta_2(t) = \sup_{s \in [t_1,t]} |\varsigma(s)-\sigma_2|, \qquad\text{for }t\geq t_1.
$$
For $\beta_2$ it is possible to deduce an a priori bound, which we then use to improve the error up to $\eps^{3/2}$.
\begin{Lemma}\label{lem:AprioriBeta}
    The a priori estimate $\beta_2(t) \ls \eps$ holds for all $t\geq t_1$.
\end{Lemma}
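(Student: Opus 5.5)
The plan is to obtain the bound on $\beta_2$ directly from the geometry of the decomposition, without using the evolution equation for $\vb$. The modulation parameter $\varsigma(t)$ is fixed by the requirement that $\vb(t)=\ub(t)-\bpsi_{\varsigma(t)}-\wb_\eps$ lies in $X_\eps^\textup{s}$. For $t\ge t_1$ we already know from Lemma~\ref{lem:InitialTransit} that $\ub(t)$ is $O(\eps)$-close to $\bpsi_{\sigma_*,\eps}$. The uniqueness part of Lemma~\ref{lem:inital_decomposition}~(ii) should then pin $\varsigma(t)$ to within $O(\eps)$ of $\sigma_2$.

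\textbf{Step 1 (the solution of \eqref{eq:ds5} keeps the defining relations).} As long as the local solution of \eqref{eq:ds5} exists, it satisfies $\ub(t)=\vb(t)+\bpsi_{\varsigma(t)}-\bpsi_{\sigma_2}+\bpsi_{\sigma_*,\eps}$ and $\vb(t)\in X_\eps^\textup{s}$. The inclusion holds because $\vb_1\in X_\eps^\textup{s}$, the integrand is projected by $\Pi_\eps^\textup{s}$, and $S_\eps$ commutes with $\Pi_\eps^\textup{s}$. The reconstruction of $\ub$ holds because the $\varsigma$-equation is exactly the $\bvphi_\eps^*$-component of the equation for $\ub-\bpsi_\varsigma-\wb_\eps$. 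This is the same equivalence used in Section~\ref{sec:ModulationDynamics} and follows from uniqueness of $H^1$-solutions of \eqref{eq:LLE_sys}.

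\textbf{Step 2 (rewrite as a perturbation of $\bpsi_{\sigma_2}$).} Using $\wb_\eps=\bpsi_{\sigma_*,\eps}-\bpsi_{\sigma_2}$, we can write
\begin{align*}
\ub(t)-\wb_\eps=\bpsi_{\sigma_2}+\eta(t)\,\mathbf{z}(t),\qquad \eta(t)\mathbf{z}(t):=\ub(t)-\bpsi_{\sigma_*,\eps},\quad \|\mathbf{z}(t)\|_{H^1}\le 1 .
\end{align*}
By Lemma~\ref{lem:InitialTransit}, $\eta(t)\ls\eps<\eta_0$ for all $t\ge t_1$. Since $|\sigma_2-\sigma_*|\ls\eps$, Lemma~\ref{lem:inital_decomposition}~(ii) applies with base point $\sigma_2$. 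It yields, for each $t\ge t_1$, a unique $\tilde\varsigma(t)$ in a fixed neighborhood $U$ of $\sigma_2$ with $|\tilde\varsigma(t)-\sigma_2|\ls\eta(t)\ls\eps$ and $\ub(t)-\wb_\eps-\bpsi_{\tilde\varsigma(t)}\in X_\eps^\textup{s}$. The size of $U$ is independent of $t$ and $\eps$, thanks to the uniform implicit function theorem (Theorem~\ref{thm:IFT}) used in that lemma.

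\textbf{Step 3 (continuity argument).} Let
\begin{align*}
T:=\sup\{t\ge t_1:\ \varsigma(s)\in U \text{ for all } s\in[t_1,t]\}.
\end{align*}
On $[t_1,T)$, Step~1 and the local uniqueness of Step~2 give $\varsigma(t)=\tilde\varsigma(t)$, hence $|\varsigma(t)-\sigma_2|\ls\eps$. At $t_1$ this is consistent, since $\varsigma(t_1)=\sigma_1$ and $|\sigma_1-\sigma_2|\ls\eps$. Because $\varsigma$ is continuous and $C\eps$ is much smaller than the radius of $U$, $\varsigma$ cannot reach $\partial U$, so $T$ coincides with the existence time. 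The bound $|\varsigma-\sigma_2|\ls\eps$ also keeps $\langle\bpsi_\varsigma',\bvphi_\eps^*\rangle_{L^2}$ bounded away from zero by Lemma~\ref{lem:estimates2}~(v). Moreover, $\|\vb(t)\|_{H^1}\le\|\ub(t)-\bpsi_{\sigma_*,\eps}\|_{H^1}+\|\bpsi_{\varsigma(t)}-\bpsi_{\sigma_2}\|_{H^1}\ls\eps$. So the blow-up alternative of Lemma~\ref{lem:local_existence}, adapted to \eqref{eq:ds5}, is excluded, the solution is global, and $\beta_2(t)\ls\eps$ for all $t\ge t_1$.

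The main obstacle is Step~3. The uniqueness in Lemma~\ref{lem:inital_decomposition} is only local, so identifying the dynamically defined $\varsigma(t)$ with the geometrically defined $\tilde\varsigma(t)$ requires the continuity argument above. It also requires that the neighborhood $U$ is uniform in $t$ and $\eps$. I would verify this uniformity first, by checking that $\partial_\varsigma G=1+O(\eps+|\varsigma-\sigma_*|)$ holds on a fixed ball around $\sigma_*$.
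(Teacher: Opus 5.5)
Your proposal is correct and follows essentially the paper's argument. You apply Lemma~\ref{lem:inital_decomposition}~(ii) pointwise in $t$ to $\ub(t)$, which is $O(\eps)$-close to the translates near $\sigma_*$, and then identify the dynamically defined $\varsigma(t)$ with the resulting geometric shift by local uniqueness; the two membership conditions agree since $\ub-\bpsi_\varsigma\in X_\eps^\textup{s}$ iff $\ub-\wb_\eps-\bpsi_\varsigma\in X_\eps^\textup{s}$, because $\wb_\eps\in X_\eps^\textup{s}$. The differences are minor or in your favor: you anchor at $\sigma_2$ via $\|\ub(t)-\bpsi_{\sigma_*,\eps}\|_{H^1}\ls\eps$ rather than at $\sigma(t)$ with a triangle inequality, and you give the continuity argument with an $\eps$- and $t$-independent uniqueness neighborhood that justifies $\varsigma=\tilde\varsigma$, a step the paper simply asserts.
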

\begin{proof}
    For $t\geq t_1$, we find by Theorem~\ref{thm:main} (i) and Lemma~\ref{lem:InitialTransit}, $\|\ub(t)-\bpsi_{\sigma(t)}\|_{H^1} \ls \eps$
    and $|\sigma(t) -\sigma_*| \ls \eps$. Applying Lemma~\ref{lem:inital_decomposition} (ii) point-wise for every $t \geq t_1$ there exists a unique $\tilde{\varsigma}(t)$ with $|\tilde{\varsigma}(t) -\sigma(t)| \ls \eps$ such that 
    $\ub(t) - \bpsi_{\tilde\varsigma(t)} \in  X_{\eps}^\textup{s}$. At the same time, the solution to~\eqref{eq:ds5} yields a decomposition with $\ub(t)- \bpsi_{\varsigma(t)} \in X_\eps^\textup{s}$ for $t \geq t_1$. Therefore, we deduce $\varsigma=\tilde\varsigma$ on $[t_1,\infty)$ and, thus, infer $|\varsigma(t) - \sigma_2| \ls |\varsigma(t) - \sigma(t)| +  |\sigma(t)-\sigma_2| \ls \eps$ for $t \geq t_1$.
    This yields the assertion upon taking the supremum over all $t \geq t_1$.
\end{proof}
Next, we aim for a detailed estimate on $\alpha_2$ and $\beta_2$.
By Lemma~\ref{lem:estimates2} we can write the equation for $\varsigma$ as
\begin{align}\label{eq:ODE_Pos4}
\begin{split}
    \dot{\varsigma}(t) &= - \eps \lambda_1 (\varsigma(t)- \sigma_2) + \mathcal{R}(\varsigma(t)-\sigma_2, \vb),
\end{split}
\end{align}
with $|\mathcal{R}(\varsigma, \vb)| \ls  |\varsigma|\|\vb\|_{H^1}  +  \eps^2 |\varsigma| + \eps |\varsigma|^2 + \|\vb\|_{H^1}^2+ |\varsigma|^3$. From~\eqref{eq:ds5},~\eqref{eq:ODE_Pos4}, and Lemmas~\ref{lem:LinearDecay},~\ref{lem:estimates2} we find
\begin{align*}
    \|\vb(t)\|_{H^1} &\ls \eu^{-\kappa (t-t_1)} \|\vb_1\|_{H^1} 
    + \int_{t_1}^t \eu^{-\kappa (t-s)} \left( \eps \beta_2(t) + \beta_2(t)^2 + \|\vb(s)\|_{H^1}^2 \right) \de s \\
    &\ls \eu^{-\kappa (t-t_1)} \eps + \eu^{-\kappa t} \left( \eps \beta_2(t) \eu^{\kappa t} + \beta_2(t)^2 \eu^{\kappa t} + \eu^{\kappa t-2\kappa \eps(t-t_1)} \alpha_2(t)^2 \right) \\
    & \ls \eu^{-\kappa (t-t_1)} \eps +  \eps \beta_2(t) + \beta_2(t)^2 + \eu^{-2\kappa \eps (t-t_1)} \alpha_2(t)^2.
\end{align*}
This implies
\begin{align}\label{eq:EstimateAlpha2}
    \alpha_2(t) \ls  \eu^{- \frac{\kappa}{2} (t-t_1)}\eps + \eu^{\kappa \eps (t-t_1)} (\eps \beta_2(t) + \beta_2(t)^2) + \alpha_2(t)^2
    \leq C \left( \eu^{- \frac{\kappa}{2} (t-t_1)}\eps + \eu^{\kappa \eps (t-t_1)} \eps^2 + \alpha_2(t)^2\right). 
\end{align}
Now we proceed by deriving an estimate for $\beta_2$. Integrating~\eqref{eq:ODE_Pos4}, we find
\begin{align*}
    \varsigma(t) - \sigma_2 = \eu^{- \eps \lambda_1 (t-t_1)} (\sigma_1-\sigma_2) + \int_{t_1}^t \eu^{-\eps\lambda_1(t-s)} \mathcal{R}(\varsigma(s) -\sigma_2,\vb(s)) \de s,
\end{align*}
and hence using Lemma~\ref{lem:AprioriBeta},
\begin{align*}
\begin{split}
    \beta_2(t) &\ls \eu^{-\eps\lambda_1(t-t_1)} |\sigma_1-\sigma_2| \\
    & \qquad+ \int_{t_1}^t \eu^{-\eps\lambda_1 (t-s)} \left( \beta_2(t) \|\vb(s)\|_{H^1} + \eps^2 \beta_2(t) + \eps \beta_2(t)^2 + \|\vb(s)\|_{H^1}^2 + \beta_2(t)^3 \right) \de s \\
    & \ls \eu^{-\eps\lambda_1 (t-t_1)} \eps 
    + \eu^{-\eps \lambda_1 t} \int_{t_1}^t \eu^{\eps \lambda_1 t_1}\alpha_2(t) \beta_2(t) + \eu^{\eps\lambda_1 s} \eps^3 + \eu^{-\eps\lambda_1 s + 2\eps\kappa t_1} \alpha_2(t)^2 + \eu^{\eps\lambda_1 s} \eps^3 \de s \\
    &  \ls  \eu^{-\eps\kappa(t-t_1)} \eps + \eu^{-\eps\kappa (t-t_1)} (t-t_1) \alpha_2(t) \beta_2(t) + \eps^2 + \eps^{-1} \eu^{-\eps\kappa(t-t_1)} \alpha_2(t)^2,
\end{split}
\end{align*}
upon taking $\kappa>0$ smaller if necessary.
Since $\eu^{-\eps\kappa (t-t_1)} (t-t_1) \ls \eps^{-1}$ we arrive at
\begin{align}\label{eq:EstimateBeta2}
    \beta_2(t) \leq C' \left(\eu^{-\eps\kappa(t-t_1)} \eps + \eps^{-1} \alpha_2(t) \beta_2(t) + \eps^2 + \eps^{-1} \eu^{-\eps\kappa(t-t_1)} \alpha_2(t)^2 \right).
\end{align}
Now we define the intermediate time-scale by setting $t_2 - t_1 = - \frac{\log(\eps)}{2\eps\kappa}> 0$. Then, for $t \in [t_1,t_2]$ we have in~\eqref{eq:EstimateAlpha2}
\begin{align*}
    \alpha_2(t) \leq C \left( \eu^{-\eps\kappa (t-t_1)} \eps + \eps^{3/2} + \alpha_2(t)^2 \right).
\end{align*}
Let $\tau_1 := \sup\{t \in [t_1,t_2] : \alpha_2(t) \leq 3 C \eps\}$. Then, for all $t \in [t_1,\tau_1)$ we have
\begin{align*}
    \alpha_2(t) \leq C \left(\eps + \eps^{3/2} +9 C^2\eps^2 \right) \leq 2 C \eps 
\end{align*}
provided that $\eps$ is sufficiently small. This proves $\tau_1 = t_2$ and 
$$
    \alpha_2(t_2) \ls \eu^{-\eps\kappa(t_2-t_1)} \eps + \eps^{3/2} + \eps^2 \ls \eps^{3/2}.
$$
Inserting the last estimate into~\eqref{eq:EstimateBeta2} we then find
$$
    \beta_2(t_2) \ls \eu^{-\eps\kappa(t_2-t_1)} \eps + \eps^{-1} \alpha_2(t_2) \beta_2(t_2) + \eps^2 + \eps^{-1} \eu^{-\eps\kappa(t_2-t_1)} \alpha_2(t_2)^2 
    \ls \eps^{3/2} + \eps^{3/2} + \eps^2 + \eps^{2} \ls \eps^{3/2}.
$$
In total, we have thus established the improved estimate
\begin{align}\label{eq:EstimateAfterIntermediate}
    \|\ub(t_2) - \bpsi_{\sigma_*,\eps}\|_{H^1} \leq \|\vb(t_2)\|_{H^1} + \|\bpsi_{\varsigma(t_2)} - \bpsi_{\sigma_2}\|_{H^1} 
    \ls \alpha_2(t_2) + \beta_2(t_2) \ls \eps^{3/2}.
\end{align}

\subsubsection{Asymptotic time regime}\label{sec:AsymptoticRegime}
For the asymptotic convergence, we study the solution, after transitioning through the intermediate time regime, on $[t_2,\infty)$.
Here $t_2 = t_1 - \log(\eps)/(2\kappa\eps) $ with $t_1$ given in Lemma~\ref{lem:InitialTransit}.
We set $\wb(t):= \ub(t) - \bpsi_{\sigma_*,\eps}$ for $t \geq t_2$. 
Then, $\wb$ satisfies the equation
\begin{align*}
    \wb(t) &= S_\eps(t-t_2) \wb(t_2) + \int_{t_2}^t S_\eps(t-s) \Non_{\bpsi_{\sigma_*,\eps}}(\wb(s)) \de s,
\end{align*}
for $t \geq t_2$, where we recall from Lemma~\ref{lem:LinearDecay} that $\|S_\eps(t)\|_{H^1 \to H^1} \ls \eu^{-\eps\kappa t}$ as well as $\|\Non_{\bpsi_{\sigma_*,\eps}}(\vb)\|_{H^1} \ls \|\vb\|_{H^1}^2$. We define 
$$
    \alpha_3(t) := \sup_{s \in [t_2,t]} \|\wb(s)\|_{H^1} \eu^{\eps\kappa (s-t_2)}, \qquad t \geq t_2.
$$
Then, since $\|\wb(t_2)\|_{H^1} = \|\ub(t_2) - \bpsi_{\sigma_*,\eps}\|_{H^1} \ls \eps^{3/2}$, we obtain for $t\geq t_2$,
\begin{align*}
    \|\wb(t)\|_{H^1} &\ls \eu^{-\eps\kappa (t-t_2)} \|\wb(t_2)\|_{H^1} + \int_{t_2}^t \eu^{-\eps\kappa (t-s)} \|\wb(s)\|_{H^1}^2 \de s \\
    &\ls \eu^{-\eps\kappa (t-t_2)} \eps^{3/2} + \eps^{-1} \eu^{-\eps\kappa (t-t_2)} \alpha_3(t)^2,
\end{align*}
which yields
\begin{align}
    \alpha_3(t) \leq C \left( \eps^{3/2} + \eps^{-1}  \alpha_3(t)^2\right).
\end{align}
This motivates the definition $\tau :=\sup\{t \geq t_2 : \alpha_3(t) \leq 3 C \eps^{3/2}\}$. Clearly, $\tau>t_2$ and for all $t \in[t_2,\tau)$ we find
\begin{align*}
    \alpha_3(t) \leq C \eps^{3/2} +  9 C^4 \eps^{2} \leq 2 C \eps^{3/2}
\end{align*}
for $\eps$ sufficiently small. 
Thus $\tau= \infty$ and the asymptotic stability $\|\ub(t) - \bpsi_{\sigma_*,\eps}\|_{H^1} \ls \eu^{-\eps\kappa (t-t_2)}$ for $t \geq t_2$ is proved. Since $t_2 = t_1 - \log(\eps)/(2\kappa\eps) \ls \eps^{-2} - \log(\eps)/(2\kappa\eps)$, we find
$
    \eu^{\eps\kappa t_2} \ls \eu^{K\eps^{-1}}\eps^{-1},
$
$K>0$, which yields the desired estimate
$$
    \|\ub(t) - \bpsi_{\sigma_*,\eps}\|_{H^1} \ls
    \frac{\eu^{K\eps^{-1}}}{\eps}\eu^{-\eps\kappa t} \qquad\text{for all $t \geq 0$.}
$$
In summary, we have thus established part (ii) of Theorem~\ref{thm:main}.

\appendix

\section{An implicit function theorem with uniform bounds}\label{sec:Append_IFT}

Throughout this paper, we need a version of the implicit function theorem which provides uniform bounds on the implicitly defined function.
Although we expect this result to be well-known, we could not find a reference in the literature and thus state it here for the sake of completeness.
\begin{Theorem}\label{thm:IFT}
Let $X,Y$ be Banach spaces and $F=F(x,\lambda) \in C^2(X\times \R,Y)$. Assume that there exists $(x_0,\lambda_0) \in X \times \R$ and $\eps,C>0$ such that
\begin{itemize}
    \item $F(x_0,\lambda_0) = 0$;
    \item $\partial_x F(x_0,\lambda_0) \colon X \to Y$ is bijective;
    \item for all $(x,\lambda) \in \overline{B_\eps(x_0,\lambda_0)}$:
    $$
        \|(\partial_xF(x,\lambda))^{-1}\|_{Y \to X} + \sum_{j=1,2} \|\partial_x^j F(x,\lambda)\|_{X^j \to Y} + \|\partial_x\partial_\lambda F(x,\lambda)\|_{X \times \R \to Y} + \|\partial_\lambda^j F(x,\lambda)\|_{\R^2 \to Y} \leq C.
    $$
\end{itemize}
Then, there exist $\delta,M>0$ depending only on $\eps, C$ and a $C^1$-branch $B_\delta(\lambda_0) \ni \lambda \to x(\lambda) \in X$ such that $(x(\lambda),\lambda) \in X \times \R$ is the unique solution of
$$
    F(x,\lambda) = 0
$$
in a small neighborhood of $(x_0,\lambda_0)$. Moreover, we have $\|x(\lambda)-x_0\| \leq M |\lambda-\lambda_0|$ for all $\lambda \in B_\delta(\lambda_0)$.
\end{Theorem}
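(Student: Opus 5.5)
The plan is to run the classical Newton-type contraction argument, but to keep explicit track of every constant so that $\delta$ and $M$ depend only on $\eps$ and $C$. Write $A := \partial_x F(x_0,\lambda_0)$. By hypothesis $A$ is bijective and $\|A^{-1}\|_{Y\to X}\le C$. For fixed $\lambda$ we define
\[
T_\lambda(x) := x - A^{-1}F(x,\lambda),
\]
whose fixed points are exactly the zeros of $F(\cdot,\lambda)$. We then show that $T_\lambda$ is a contraction on a closed ball $\overline{B_r(x_0)}\subset X$ for every $|\lambda-\lambda_0|<\delta$. The radius $r\le \eps/2$ and $\delta\le\eps/2$ will be chosen depending only on $(\eps,C)$. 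These choices keep $(x,\lambda)$ inside $\overline{B_\eps(x_0,\lambda_0)}$, where the uniform bounds are available.

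\textbf{Contraction estimate.} For $x_1,x_2\in \overline{B_r(x_0)}$ write
\[
T_\lambda x_1-T_\lambda x_2 = A^{-1}\int_0^1\big(A-\partial_xF(x_2+s(x_1-x_2),\lambda)\big)\,\de s\,(x_1-x_2).
\]
Applying the mean value theorem with the bounds on $\partial_x^2F$ and $\partial_x\partial_\lambda F$ gives
\[
\|A-\partial_xF(y,\lambda)\|\le C(\|y-x_0\|+|\lambda-\lambda_0|)\le C(r+\delta).
\]
Hence the Lipschitz constant of $T_\lambda$ is at most $C^2(r+\delta)$, and we make this $\le 1/2$ by choosing $r,\delta\le 1/(4C^2)$.

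\textbf{Self-mapping.} For self-mapping we estimate
\[
\|T_\lambda x-x_0\|\le \|T_\lambda x-T_\lambda x_0\|+\|A^{-1}F(x_0,\lambda)\|.
\]
Since $F(x_0,\lambda_0)=0$ and $\|\partial_\lambda F\|\le C$, the second term is bounded by $C^2|\lambda-\lambda_0|$. So $\|T_\lambda x - x_0\|\le r/2+C^2\delta\le r$ once $\delta\le r/(2C^2)$.

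\textbf{Existence, uniqueness and the Lipschitz bound.} Banach's fixed point theorem then yields a unique $x(\lambda)\in\overline{B_r(x_0)}$ with $F(x(\lambda),\lambda)=0$. This is the uniqueness in the neighborhood $B_r(x_0)\times B_\delta(\lambda_0)$. The bound $\|x(\lambda)-x_0\|\le M|\lambda-\lambda_0|$ with $M=2C^2$ follows from
\[
\|x(\lambda)-x_0\|\le \tfrac12\|x(\lambda)-x_0\|+C^2|\lambda-\lambda_0|.
\]

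\textbf{Regularity.} To get $C^1$ regularity we note that $\partial_xF(x(\lambda),\lambda)$ is invertible with norm of the inverse $\le C$ by hypothesis. The standard argument then shows that $\lambda\mapsto x(\lambda)$ is differentiable with
\[
x'(\lambda)=-\partial_xF^{-1}\partial_\lambda F.
\]
This uses the $C^2$ regularity of $F$ and the Lipschitz continuity of $x(\cdot)$ just proved. Continuity of $x'$ then follows from continuity of inversion and of $\partial F$.

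\textbf{Main obstacle.} The only real point is bookkeeping: every smallness condition on $r,\delta$ must be expressed solely through $C$ and $\eps$ and never through $(x_0,\lambda_0)$. That is exactly why the hypotheses are imposed uniformly on the whole ball $\overline{B_\eps(x_0,\lambda_0)}$ rather than just at the base point. With $r=\min\{\eps/2,1/(4C^2)\}$ and $\delta=\min\{r/(2C^2),1/(4C^2),\eps/2\}$, all estimates close.
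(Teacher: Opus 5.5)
Your proof is correct and follows the same route as the paper, which runs the contraction-mapping proof of the implicit function theorem from Ambrosetti--Prodi (the map $x\mapsto x-A^{-1}F(x,\lambda)$ with $A=\partial_xF(x_0,\lambda_0)$) and uses second-order Taylor bounds, via $\partial_x^2F$, $\partial_x\partial_\lambda F$ and $\partial_\lambda F$, so that the contraction and self-mapping radii depend only on $\eps$ and $C$, exactly as you do. One small point: for differentiability at a general $\lambda$ you need the Lipschitz bound $\|x(\lambda_1)-x(\lambda_2)\|\le 2C^2|\lambda_1-\lambda_2|$ on the whole interval, not just the estimate at $\lambda_0$; this follows from the identical argument $\|x(\lambda_1)-x(\lambda_2)\|\le\tfrac12\|x(\lambda_1)-x(\lambda_2)\|+C^2|\lambda_1-\lambda_2|$.
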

\begin{proof}
The proof follows line by line the proof of the implicit function theorem~\cite[Thm.~2.3]{Ambrosetti1995Primer}.
However, in order to obtain uniform bounds, we employ Taylor's formula~\cite[p.~28]{Ambrosetti1995Primer} up to second order, to control the size of constants that appear in the contraction mapping argument.
\end{proof}

\section*{Acknowledgments} Funded by the Deutsche Forschungsgemeinschaft (DFG, German Research Foundation) -- Project-ID 258734477 -- SFB 1173. 

\bibliographystyle{plain}
\bibliography{bibliography}

\end{document}